\newcommand{\fdem}{\hspace*{\fill}~$\Box$\par\endtrivlist\unskip}
\newcommand{\Z}{{Z\!\!\!Z}}
\newtheorem{hypo}{Hypothesis}
\newtheorem{prop}[hypo]{Proposition}
\newtheorem{thm}[hypo]{Theorem}
\newtheorem{lem}[hypo]{Lemma}
\def\PP{\mathbb{P}}
\def\RR{\mathbb{R}}
\def\ZZ{\mathbb{Z}}
\def\CC{\mathbb{C}}
\def\EE{\mathbb{E}}
\newcommand {\cro}[1] {\left[ {#1} \right]}
\newcommand {\va}[1] {\left| {#1} \right|}
\newcommand {\floor}[1] {\left\lfloor {#1} \right\rfloor}
\title{Renewal theorems for random walks in random scenery}
\author{Nadine Guillotin-Plantard} 
\address{Institut Camille Jordan, CNRS UMR 5208, Universit\'e de Lyon, Universit\'e Lyon 1, 43, Boulevard du 11 novembre 1918, 69622 Villeurbanne, France.}
\email{nadine.guillotin@univ-lyon1.fr}
\author{Fran\c{c}oise P\`ene}
\address{Universit\'e Europ\'eenne de Bretagne,
D\'epartement de Math\'ematiques, UMR CNRS 6205, 29238 Brest cedex, France}
\email{francoise.pene@univ-brest.fr}
\subjclass[2000]{60F05; 60G52}
\keywords{Random walk in random scenery; renewal theory; local time; stable distribution\\
This work was partially supported by the french ANR project MEMEMO2}
\begin{document}

\begin{abstract} 
Random walks in random scenery are processes defined by  
$Z_n:=\sum_{k=1}^n\xi_{X_1+...+X_k}$, where $(X_k,k\ge 1)$ and 
$(\xi_y,y\in\mathbb Z)$
are two independent sequences of i.i.d.~random variables. 
We suppose that the distributions of $X_1$ and $\xi_0$ belong
to the normal domain of attraction of strictly stable distributions
with index $\alpha\in[1,2]$ and $\beta\in(0,2)$ respectively. 
We are interested in the asymptotic behaviour as $|a|$ goes
to infinity of quantities of the form
$\sum_{n\ge 1}{\mathbb E}[h(Z_n-a)]$ (when $(Z_n)_n$ is transient) or 
$\sum_{n\ge 1}{\mathbb E}[h(Z_n)-h(Z_n-a)]$ (when $(Z_n)_n$ is recurrent) where $h$ is some complex-valued function defined on $\mathbb{R}$ or $\mathbb{Z}$.
\end{abstract}
\maketitle

\section{Introduction} 
Renewal theorems in probability theory deal with the asymptotic behaviour when $|a|\rightarrow +\infty$ of the potential kernel formally defined as
$$ K_a(h):= \sum_{n=1}^{\infty} \mathbb{E}[h(Z_n-a)]$$
where $h$ is some complex-valued function defined on $\mathbb{R}$ and $(Z_n)_{n\geq 1}$ a real
transient random process. 
The above kernel $K_a(.)$ 
is not well-defined for recurrent process $(Z_n)_{n\geq 1}$, in that case, we would rather 
study the kernel 
$$G_{n,a}(h) := \sum_{k=1}^n \Big\{\mathbb{E}[ h(Z_k)] - \mathbb{E}[ h(Z_k-a)] \Big\}$$ 
for $n$ and $|a|$ large.
In the classical case when $Z_n$ is the sum of $n$ non-centered  
independent and identically distributed real random variables, renewal theorems were proved 
by Erd\"{o}s, Feller and Pollard \cite{EFP}, Blackwell \cite{Bla1,Bla2}, Breiman \cite{Bre}. 
Extensions to multi-dimensional real random walks or additive functionals of Markov chains 
were also obtained (see \cite{Gui} for statements and references). 

In the particular case where the process $(Z_n)_{n\geq 1}$ takes its values in $\mathbb{Z}$ and $h$ is the Dirac function at 0, the study of the corresponding kernels
$$K_a(\delta_0)= \sum_{n=1}^{\infty} \mathbb{P}[Z_n=a]$$
and
$$G_{n,a}(\delta_0)= \sum_{k=1}^n \Big\{\mathbb{P}[Z_k=0] - \mathbb{P}[Z_k=a]\Big\}$$  
have a long history (see \cite{spitzer}). 
In the case of aperiodic recurrent random walks on $\mathbb{Z}$  with finite variance, the potential kernel is known to behave asymptotically as $|a|$ 
when $|a|$ goes to infinity
and, for some particular  random walks as the simple random walk,  an explicit formula can be given  (see Chapter VII in \cite{spitzer}). \\*
In this paper we are interested in renewal theorems for random walk in random scenery (RWRS). Random walk in random scenery  is a simple model of process in disordered
media with long-range correlations. They have been used in a wide
variety of models in physics to study anomalous dispersion in layered
random flows \cite{matheron_demarsily},  diffusion with random sources, 
or spin depolarization in random fields (we refer the reader
to Le Doussal's review paper \cite{ledoussal} for a discussion of these
models). On the mathematical side, motivated by the construction of 
new self-similar processes with stationary increments, 
Kesten and Spitzer \cite{KestenSpitzer} and Borodin  \cite{Borodin, Borodin1} 
introduced RWRS in dimension one and proved functional limit theorems. 
This study has been completed in many works, in particular in
\cite{bolthausen} and \cite{DU}.
These processes are defined as follows. 
We consider two independent sequences $(X_k,k\ge 1)$ and 
$(\xi_y,y\in\mathbb Z)$
of independent identically distributed random variables with values 
in $\mathbb Z$ and $\mathbb R$ respectively.
We define 
$$\forall n\ge 1,\ \ S_n:=\sum_{k=1}^nX_k\ \ \mbox{and}\ \ S_0:=0.$$
The {\it random walk in random scenery} $Z$ is 
then defined for all $n\ge 1$ by
$$Z_n:=\sum_{k=1}^{n}\xi_{S_k}.$$ 
The symbol $\#$ stands for the cardinality of a finite set. Denoting by  
$N_n(y)$ the local time of the random walk $S$~:
$$N_n(y)=\#\big\{k=1,...,n\ :\ S_k=y\big\} \, 
$$
the random variable $Z_n$ can be rewritten as 
\begin{equation} 
\label{re-ecriture-Zn}
Z_n=\sum_{y\in\Z}\xi_yN_n(y).
\end{equation}
The distribution of $\xi_0$ is assumed to belong to the normal 
domain of attraction of a strictly stable distribution 
$\mathcal{S}_{\beta}$ of 
index $\beta\in (0,2]$, with characteristic function $\phi$ given by
\begin{equation}\label{FC}
\phi(u)=e^{-|u|^\beta(A_1+i A_2\  \text{sgn}(u))},\quad u\in\mathbb{R},
\end{equation}
where $0<A_1<\infty$ and $ |A_1^{-1}A_2|\le |\tan (\pi\beta/2)|$. When $\beta=1$, $A_2$ is null.
We will denote by $\varphi_\xi$ the characteristic function of the random variables~$\xi_x$. 
When $\beta > 1$, this implies that $\EE[\xi_0] = 0$. 
Under these conditions, we have, for $\beta \in (0,2]$, 
\begin{equation}
\label{queue}
\forall t > 0 \, , \,\, \PP \cro{ \va{\xi_0} \ge t} \le \frac{C(\beta)}{t^{\beta}} \, .
\end{equation}
Concerning the random walk $(S_n)_{n\geq1}$,  the distribution of $X_1$ 
is assumed to belong to the normal
domain of attraction of a strictly stable distribution  $\mathcal{S}_{\alpha}'$ of 
index $\alpha$. Since, when $\alpha<1$, the behaviour of $(Z_n)_n$ is
very similar to the behaviour of the sum of the $\xi_k$'s, $k=1,\ldots,n$, we restrict ourselves to the study
of the case when $\alpha\in [1,2]$. 
Under the previous assumptions, the following weak convergences hold in the space  of 
c\`adl\`ag real-valued functions 
defined on $[0,\infty)$ and on $\mathbb R$ respectively,  endowed with the 
Skorohod topology ~:
$$\left(n^{-\frac{1}{\alpha}} S_{\lfloor nt\rfloor}\right)_{t\geq 0}   
\mathop{\Longrightarrow}_{n\rightarrow\infty}
^{\mathcal{L}} \left(U(t)\right)_{t\geq 0}$$
$$\mbox{\rm and} \  \  \   \left(n^{-\frac{1}{\beta}} 
\sum_{k=0}^{\lfloor nx\rfloor}\xi_{k }\right)_{x\ge 0}
   \mathop{\Longrightarrow}_{n\rightarrow\infty}^{\mathcal{L}} 
\left(Y(x)\right)_{x\ge 0},$$
where $U$ and $Y$ are two independent L\'evy processes such 
that $U(0)=0$, $Y(0)=0$, 
$U(1)$ has distribution $\mathcal{S}'_{\alpha}$ and $Y(1)$ 
has distribution  $\mathcal{S}_\beta$. 
For $\alpha\in\ ]1,2]$, we will denote by $(L_t(x))_{x\in\mathbb{R},t\geq 0}$ a continuous version with compact support of the local time of the process
 $(U(t))_{t\geq 0}$ and  by $|L|_\beta$ the random variable $\left( \int_{\mathbb R} L_1^
 \beta (x)\, d x \right)^{1/\beta} $. Next let us define 
\begin{equation} 
\label{def-delta}
\delta:=1-\frac 1\alpha+\frac 1{\alpha\beta} = 1 + \frac 1\alpha(\frac{1}{\beta}-1).
\end{equation}
In \cite{KestenSpitzer}, Kesten and Spitzer proved 
the convergence in distribution of $((n^{-\delta} Z_{nt})_{t\ge 0})_n$, when $\alpha>1$, 
to a process $(\Delta_t)_{t\geq 0}$ defined as
$$\Delta_t = \int_{\mathbb{R}} L_t(x) \, dY(x),$$
by considering a process $(Y(-x))_{x\ge 0}$ with the same distribution
as $(Y(x))_{x\ge 0}$ and independent of $U$ and $(Y(x))_{x\ge 0}$.

\noindent In \cite{DU}, Deligiannidis and Utev considered the case when $\alpha=1$ and $\beta=2$
and proved the convergence in distribution of $( (Z_{nt} /\sqrt{n\log(n)})_{t\ge 0})_n$ to
a Brownian motion. This result is got by an adaptation of the proof of the same result 
by Bothausen in \cite{bolthausen} in the case 
when $\beta=2$ and for a square integrable two-dimensional random walk $(S_n)_n$.

\noindent In \cite{FFN}, Castell, Guillotin-Plantard and P\`ene completed the study of the case
$\alpha=1$ by proving the convergence of $( n^{-\frac 1\beta}(\log(n))^{\frac 1\beta-1}
Z_{nt})_{t\ge 0} )_n$ to $c^{\frac 1\beta}(Y(t))_{t\in\mathbb R}$, with 
\begin{equation}\label{cc}
c:=(\pi a_0)^{1-\beta}\Gamma(\beta+1),
\end{equation}
 where $a_0$ is such that
$t\mapsto e^{-a_0|t|}$ is the characteristic function of the limit of
$(n^{-1}S_n)_n$.

\noindent Let us indicate that, when $\alpha\ge 1$, the process
$(Z_n)_n$ is transient (resp. recurrent) if $\beta<1$
(resp. $\beta> 1$) (see \cite{BFFN,KS}).

\noindent We recall the definition of the Fourier transform $\hat h$ as follows.
For every $h:\mathbb R\rightarrow \mathbb C$ (resp. $h:\mathbb Z\rightarrow \mathbb C$)
integrable with respect to the Lebesgue measure on $\mathbb R$ (resp. with
respect to the counting measure on $\mathbb Z$), we denote by $I[h]$
the integral of $h$ and by $\hat h:\mathcal I\rightarrow\mathbb C$ its Fourier transform
defined by
$$\forall x\in{\mathcal I},\ \ \hat h(x):=I[h(\cdot)e^{ix\cdot}],\ \ \mbox{with}\
   {\mathcal I}={\mathbb R}\ \ \mbox{(resp. } {\mathcal I}=[-\pi;\pi]\mbox{)}.$$ 
\subsection{Recurrent case~: $\beta\in [1,2]$} 

We consider two distinct cases:
\begin{itemize}
\item{\bf Lattice case:} The random variables $(\xi_x)_{x\in \ZZ}$ are assumed to be $\mathbb Z$-valued and non-arithmetic i.e. 
$\{u; |\varphi_\xi(u)|=1\}=2\pi\ZZ$.\\*
\noindent The distribution of $\xi_0$ belongs to the normal 
domain of attraction of $\mathcal S_\beta$ with characteristic function $\phi$ given by
(\ref{FC}). 
\item{\bf Strongly non-lattice case:} The random variables $(\xi_x)_{x \in\ZZ}$ are assumed to be 
strongly non-lattice i.e.
$$\limsup_{|u|\rightarrow +\infty}|\varphi_\xi(u)|<1.$$
The distribution of $\xi_0$ belongs to the normal 
domain of attraction of $\mathcal S_\beta$ with characteristic function $\phi$ given by
(\ref{FC}). \end{itemize}
For any $a\in \mathbb{R}$ (resp. $a\in\mathbb Z$), we consider  the  kernel  
$K_{n,a} $ defined as follows~: for any $h : \mathbb{R}\to\mathbb{C}$ 
(resp. $h : \mathbb{Z}\to\mathbb{C}$) in the strongly non-lattice (resp. in the lattice) case, we write 
$$K_{n,a}(h) :=   \sum_{k=1}^n \Big\{ \mathbb{E}[h(Z_k)] -  {\mathbb E}[h(Z_k-a)]\Big\}$$ 
when it is well-defined.

\begin{thm}\label{threc2}
The following assertions hold for every integrable function $h$ on $\mathbb R$ 
with Fourier transform integrable on $\mathbb R$ in the strongly non-lattice case and for every integrable function $h$ on $\mathbb Z$ 
 in the lattice case.
\begin{itemize}
\item when $\alpha>1$ and $\beta>1$,
 $$\lim_{a\rightarrow +\infty}  a^{1-\frac 1 \delta}
    \lim_{n\rightarrow +\infty }   K_{n,a}(h)  =C_1  I[h],$$
with
$$ C_1:=\frac{ \Gamma(\frac{1}{\delta\beta})\Gamma(2-\frac{1}{\delta})\mathbb{E} 
\big[|L|_{\beta}^{-1/\delta}\big]}{\pi\beta (1-\delta)(A_1^2 +A_2^2)^{1/2\delta\beta}}
\sin\left(\frac{1}{\delta} \left(\frac{\pi}{2} -\frac{1}{\beta} \arctan 
\Big(\frac{A_2}{A_1}\Big)\right)\right).$$
\item when $\alpha\ge 1$ and $\beta=1$,
 $$\lim_{a\rightarrow +\infty} ( \log a)^{-1}
    \lim_{n\rightarrow +\infty }   K_{n,a}(h)  =C_2  I[h],$$
with $C_2:=(\pi A_1)^{-1}$.
\item when $\alpha=1$ and $\beta\in(1,2)$,
 $$ \lim_{a \rightarrow +\infty} \left(a^{-1}\log (a^\beta)\right)^{\beta-1}
    \lim_{n\rightarrow +\infty }   K_{n,a}(h)=D_1I[h],$$
with
$$D_1:= \frac{\Gamma(2-\beta) }
   {\pi c (\beta-1) (A_1^2+A_2^2)^{1/2}}
    \sin\left( \frac{\pi\beta}{2} -  \arctan \Big(\frac{A_2}{A_1}\Big) \right).$$
\item when $\alpha=1$ and $\beta=2$, assume that $h$ is even and that the distribution of the $\xi_x's$ is symmetric, then
 $$ \lim_{a\rightarrow +\infty} \left(a^{-1}  \log (a^2)\right) 
    \lim_{n\rightarrow +\infty }   K_{n,a}(h)=D_2I[h],$$
with $D_2:=(2 A_1 c)^{-1} .$
\end{itemize}
\end{thm}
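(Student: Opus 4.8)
The plan is to treat the four regimes through a single Fourier-analytic scheme, isolating in each case the behaviour near the origin of a resolvent-type function. First I would apply Fourier inversion: writing $h(y)=\frac{1}{2\pi}\int_{\mathcal I}\hat h(t)e^{-ity}\,dt$ and inserting this into $K_{n,a}(h)$ gives
$$K_{n,a}(h) = \frac{1}{2\pi}\int_{\mathcal I}\hat h(t)\,(1 - e^{ita})\sum_{k=1}^n \mathbb E[e^{-itZ_k}]\,dt,$$
the interchange of sum and integral being justified by the integrability hypotheses on $h$ (respectively on $\hat h$). Conditioning on the walk $S$ and using the independence and the i.i.d. structure of the scenery together with (\ref{re-ecriture-Zn}) yields
$$\mathbb E[e^{-itZ_k}] = \mathbb E\Big[\prod_{y\in\mathbb Z}\varphi_\xi(-t\,N_k(y))\Big].$$

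Next I would replace $\varphi_\xi$ by the stable approximation $\varphi_\xi(u)=\exp(-|u|^\beta(A_1+iA_2\,\text{sgn}(u))+o(|u|^\beta))$ coming from (\ref{FC}) and the normal-domain-of-attraction assumption, legitimate for small $t$ because the dominant contribution comes from sites where $tN_k(y)$ is of order one. Since $N_k(y)\ge0$, the product collapses to an exponential in the self-intersection local time $V_k:=\sum_y N_k(y)^\beta$, giving
$$\mathbb E[e^{-itZ_k}] \approx \mathbb E\big[\exp(-|t|^\beta(A_1 - iA_2\,\text{sgn}(t))\,V_k)\big].$$
When $\alpha>1$ I would invoke the convergence $V_k/k^{\delta\beta}\Rightarrow|L|_\beta^\beta$, with $\delta$ as in (\ref{def-delta}), to pass to $\Phi(t):=\lim_n\sum_{k=1}^n\mathbb E[e^{-itZ_k}]$, approximate the sum over $k$ by an integral, and obtain, as $t\to0$,
$$\Phi(t)\sim \frac{\Gamma(1/(\delta\beta))}{\delta\beta}\,\mathbb E\big[|L|_\beta^{-1/\delta}\big]\,|t|^{-1/\delta}(A_1 - iA_2\,\text{sgn}(t))^{-1/(\delta\beta)},$$
the $\Gamma$-factor arising from $\int_0^\infty e^{-cs^{\delta\beta}}\,ds$. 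When $\alpha=1$ the local time has no nondegenerate limit, so instead I would use that $V_k$ concentrates around $c\,k(\log k)^{\beta-1}$ with $c$ as in (\ref{cc}), which produces the extra logarithmic factors $(\log a)^{\beta-1}$ (resp. $\log a$ for $\beta=2$) in the statement.

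It remains to feed the behaviour of $\Phi$ near $0$ into the oscillatory integral. Away from the origin $\Phi$ is bounded --- here the strongly non-lattice (resp. non-arithmetic lattice) hypothesis is essential, as it forces decay of the characteristic function of $Z_k$ and hence summability --- so by Riemann--Lebesgue that part contributes only $O(1)$, negligible against the divergent prefactors. Replacing $\hat h(t)$ by $\hat h(0)=I[h]$ near $0$ and rescaling $t=u/a$ converts $\int(1-e^{ita})\,|t|^{-1/\delta}(A_1 - iA_2\,\text{sgn}(t))^{-1/(\delta\beta)}\,dt$ into $a^{1/\delta-1}$ times a fixed convergent integral; evaluating the latter through $\int_0^\infty(1-e^{\pm iu})u^{-1/\delta}\,du$ produces the factor $\Gamma(2-1/\delta)/(1-\delta)$ and, after combining the phases of $(A_1\mp iA_2)^{-1/(\delta\beta)}$ with that of the $\Gamma$-integral, the sine term and the $(A_1^2+A_2^2)^{1/(2\delta\beta)}$ denominator appearing in $C_1$. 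The borderline cases $\delta=1$ (i.e. $\beta=1$) and $\alpha=1$ are handled identically, the divergence $\int(1-\cos(ta))|t|^{-1}\,dt\sim2\log a$ yielding $C_2$, and the analogue with logarithmic scaling yielding $D_1$ and $D_2$; the symmetry assumptions for $\beta=2$ serve to kill the imaginary part and to control the more delicate error terms at this doubly critical exponent.

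The main difficulty lies in making the heuristic of the second paragraph rigorous uniformly in $k$ and $t$: one must bound the error in the stable approximation of $\prod_y\varphi_\xi(-tN_k(y))$ over the entire range of local times, and --- crucially --- upgrade the distributional convergence $V_k/k^{\delta\beta}\Rightarrow|L|_\beta^\beta$ to convergence of the negative moment $\mathbb E[V_k^{-1/\delta}]\to\mathbb E[|L|_\beta^{-1/\delta}]$, which demands an anti-concentration lower bound on $V_k$ ensuring $\mathbb E[|L|_\beta^{-1/\delta}]<\infty$. Equally delicate is the exchange of $\lim_n$ with the $t$-integral, which requires a dominating bound on the partial sums $\sum_{k=1}^n\mathbb E[e^{-itZ_k}]$ that is uniform in $n$ and integrable near $t=0$ against $\hat h(t)(1-e^{ita})$; securing such a bound, together with the separate treatment of the degenerate scalings when $\alpha=1$ or $\beta=2$, is where the bulk of the technical work will go.
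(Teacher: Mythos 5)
Your proposal follows essentially the same route as the paper's proof: Fourier inversion of $K_{n,a}(h)$, replacement of the true scenery characteristic function by the stable one (the paper's Proposition~\ref{tildepsi}), asymptotics $\tilde\psi(t)\sim\gamma(t)$ near $t=0$ obtained through the self-intersection functional $V_n$, its limit theorems and negative-moment/uniform-integrability bounds (the paper's Proposition~\ref{series-2} and its lemmas), followed by boundedness away from the origin via the (non-)lattice hypothesis, a rescaling/approximate-identity argument in $t=u/a$, and a residue computation yielding the constants, with the evenness assumption at $(\alpha,\beta)=(1,2)$ used to replace $1-e^{-ita}$ by $1-\cos(ta)$ and restore integrability near $0$. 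The technical obstacles you single out at the end (uniform control of the stable approximation over all local-time configurations, convergence of the negative moments of $V_n$, and an $n$-uniform dominating bound for the exchange of $\lim_n$ with the $t$-integral) are exactly the ones the paper resolves, partly by importing local-limit estimates from its references, so your plan is sound and matches the published argument.
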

\noindent{\bf Remarks:} 
1- It is worth noticing that since $|A_2/A_1|\leq |\tan( \pi\beta /2)|$,  the constants $C_1$ 
and $D_1$ are strictly positive.\\
2- The limit as $a$ goes to $-\infty$ is not considered in Theorem \ref{threc2} 
and Theorem \ref{threc0} since it can be easily obtained from the limit as $a$ goes to infinity. 
Indeed, the problem is then equivalent to study the limit as $a$ goes to infinity with the 
random variables  $(\xi_x)_x$ replaced by $(-\xi_x)_x$ and the function $h$ by 
$x\mapsto h(-x)$. 
The limits can easily be deduced from the above limit constants by changing  $A_2$ to $-A_2$.
\subsection{Transient case~: $\beta\in (0,1)$}   
\noindent Let ${\mathcal H}_1$ denote the set of all the complex-valued Lebesgue-integrable functions $h$ such that its Fourier transform 
$\hat h$ is continuously differentiable on $\mathbb{R}$, with in addition $\hat h$ and $(\hat h)'$ Lebesgue-integrable. 
\begin{thm} \label{threc0}
Assume that $\alpha\in (1,2]$ and that the characteristic function of the random variable $\xi_0$ is equal to $\phi$ given
by (\ref{FC}). \\*
\noindent Then, for all $h\in{\mathcal H}_1$, we have 
$$\lim_{a\rightarrow +\infty}  a^{1-\frac 1 \delta}
    \sum_{n\ge 1}{\mathbb E}[h(Z_n-a)]= C_0\  I[h]$$
with 
$$C_0:= \frac{\Gamma(\frac{1}{\delta\beta})\Gamma(2-\frac{1}{\delta})\mathbb{E} 
\big[|L|_{\beta}^{-1/\delta}\big]}{\pi\beta (\delta-1) (A_1^2 +A_2^2)^{1/2\delta\beta}}
\sin\left(\frac{1}{\delta} \left(\frac{\pi}{2} -\frac{1}{\beta} \arctan 
\Big(\frac{A_2}{A_1}\Big)\right)\right).$$

\end{thm}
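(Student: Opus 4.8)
The plan is to compute $K_a(h):=\sum_{n\ge 1}\EE[h(Z_n-a)]$ through the Fourier transform and to isolate the contribution of the singularity of an associated potential at the origin. Since $h\in\mathcal H_1$, Fourier inversion gives $h(Z_n-a)=\frac1{2\pi}\int_{\RR}\hat h(t)\,e^{ita}\,e^{-itZ_n}\,dt$, whence, taking expectations and summing,
$$K_a(h)=\frac1{2\pi}\int_{\RR}\hat h(t)\,e^{ita}\,\psi(t)\,dt,\qquad \psi(t):=\sum_{n\ge1}\EE\big[e^{-itZ_n}\big],$$
the interchange of $\sum_n$ and $\int dt$ being licit since $\hat h$ is integrable while $\psi$ has only an integrable singularity at $0$ and is bounded away from it, by transience. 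The whole point is that, as $a\to+\infty$, the asymptotics of this oscillatory integral are governed by the behaviour of $\hat h(t)\psi(t)$ near $t=0$: away from the origin, $\hat h\in C^1$ with $(\hat h)'$ integrable lets one integrate by parts to gain a factor $a^{-1}$, which is negligible compared with the expected order $a^{1/\delta-1}$ since $\delta>1$ forces $1/\delta-1\in(-1,0)$.

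Next I would compute $\psi$ explicitly by conditioning on the walk $S$. Because in the transient regime the scenery has the exact stable law $\varphi_\xi=\phi$, and because $N_n(y)\ge0$, the rewriting $Z_n=\sum_y\xi_yN_n(y)$ and the independence of the $\xi_y$ give
$$\EE\big[e^{-itZ_n}\mid S\big]=\prod_{y\in\Z}\phi\big(-tN_n(y)\big)=\exp\big(-\lambda(t)\,V_n\big),\qquad \lambda(t):=|t|^\beta\big(A_1-iA_2\,\mathrm{sgn}(t)\big),\ \ V_n:=\sum_{y\in\Z}N_n(y)^\beta.$$
Hence $\psi(t)=\EE\big[\sum_{n\ge1}e^{-\lambda(t)V_n}\big]$, and everything is reduced to the small-$t$ behaviour of this Laplace-type sum of the self-intersection functional $V_n$. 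Note $\mathrm{Re}\,\lambda(t)=A_1|t|^\beta>0$, so each term is at most $1$ and, for fixed $t\neq0$, the growth of $V_n$ (of order $n^{\delta\beta}$) guarantees summability, which is what makes $\psi$ integrable away from $0$.

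The heart of the proof is the asymptotics $\psi(t)\sim \kappa\,|t|^{-1/\delta}\,e^{i\,\mathrm{sgn}(t)\,\theta/(\delta\beta)}$ as $t\to0$, with $\theta:=\arctan(A_2/A_1)$. The mechanism is that for small $t$ the sum is dominated by indices $n$ of order $|t|^{-1/\delta}\to\infty$, where the scaling limit $n^{-\delta\beta}V_n\Rightarrow |L|_\beta^\beta$ applies. Replacing the sum by an integral and using $\int_0^\infty e^{-\lambda(t)|L|_\beta^\beta s^{\delta\beta}}\,ds=\frac{\Gamma(1/(\delta\beta))}{\delta\beta}\big(\lambda(t)|L|_\beta^\beta\big)^{-1/(\delta\beta)}$, then taking expectations and writing $\lambda(t)=(A_1^2+A_2^2)^{1/2}|t|^\beta e^{-i\,\mathrm{sgn}(t)\theta}$, yields
$$\psi(t)\sim \frac{\Gamma\big(\tfrac1{\delta\beta}\big)}{\delta\beta}\,(A_1^2+A_2^2)^{-1/(2\delta\beta)}\,\EE\big[|L|_\beta^{-1/\delta}\big]\,|t|^{-1/\delta}\,e^{i\,\mathrm{sgn}(t)\,\theta/(\delta\beta)}.$$
This is the step I expect to be the \emph{main obstacle}: one must upgrade the distributional convergence $n^{-\delta\beta}V_n\Rightarrow|L|_\beta^\beta$ to a form strong enough (uniform integrability, control of the sum-to-integral approximation uniformly over the relevant range of $n$, and an error estimate integrable against $\hat h$) to pass to the limit inside the expectation, and simultaneously to establish the finiteness of the negative moment $\EE[|L|_\beta^{-1/\delta}]$, i.e.\ an anti-concentration bound for $|L|_\beta$.

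Finally I would feed this singular expansion into the oscillatory integral. Substituting $t=s/a$ (for $a>0$) turns the leading term into $a^{1/\delta-1}\,\frac{\kappa}{2\pi}\,\hat h(0)\int_{\RR}e^{is}\,|s|^{-1/\delta}\,e^{i\,\mathrm{sgn}(s)\theta/(\delta\beta)}\,ds$, with $\hat h(0)=I[h]$ and $\kappa$ the prefactor above. Splitting into $s\gtrless0$ and using $\int_0^\infty s^{p-1}e^{\pm is}\,ds=\Gamma(p)e^{\pm i\pi p/2}$ with $p=1-\tfrac1\delta\in(0,1)$ gives
$$\int_{\RR}e^{is}|s|^{-1/\delta}e^{i\,\mathrm{sgn}(s)\theta/(\delta\beta)}\,ds=2\,\Gamma\big(1-\tfrac1\delta\big)\,\sin\!\Big(\tfrac1\delta\big(\tfrac\pi2-\tfrac\theta\beta\big)\Big).$$
Collecting the constants and using $\Gamma(2-\tfrac1\delta)=(1-\tfrac1\delta)\Gamma(1-\tfrac1\delta)$ together with $(1-\tfrac1\delta)/(\delta-1)=1/\delta$ identifies the limit as $C_0\,I[h]$, which proves the theorem; the contributions of $t$ away from $0$ and of the error terms from the previous step are $o(a^{1/\delta-1})$ by the integration-by-parts and uniform-control arguments already mentioned.
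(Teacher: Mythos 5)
Your overall route is the paper's own: Fourier inversion reduces $\sum_{n\ge 1}\mathbb{E}[h(Z_n-a)]$ to the oscillatory integral $\frac{1}{2\pi}\int_{\mathbb R}\hat h(t)\psi(t)e^{ita}\,dt$, the engine is the small-$t$ singularity $\psi(t)\sim \frac{\Gamma(1/(\delta\beta))}{\delta\beta}\,\mathbb{E}\big[|L|_\beta^{-1/\delta}\big](A_1^2+A_2^2)^{-1/(2\delta\beta)}|t|^{-1/\delta}e^{i\,\mathrm{sgn}(t)\theta/(\delta\beta)}$, and your constant bookkeeping is correct (the identities $\Gamma(2-\frac1\delta)=(1-\frac1\delta)\Gamma(1-\frac1\delta)$ and $(1-\frac1\delta)/(\delta-1)=\frac1\delta$ do reconcile your limit with $C_0$). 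The obstacle you flag honestly — upgrading $n^{-\delta\beta}V_n\Rightarrow|L|_\beta^\beta$ to something usable inside the expectation and proving $\mathbb{E}[|L|_\beta^{-1/\delta}]<\infty$ — is real, and the paper fills it with Lemma~\ref{Vn}, the moment bounds of Lemma~\ref{Vn2}, a Skorohod-representation construction, and the uniform integrability of Lemma~\ref{UI}; you name the ingredients but do not supply them.

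The genuine gap, however, is the step you treat as routine at the end. The pointwise asymptotics $\psi(t)=\gamma(t)(1+o(1))$ alone does \emph{not} yield the asymptotics of the oscillatory integral at scale $a^{1/\delta-1}$. After the substitution $t=s/a$ the limiting integrand $|s|^{-1/\delta}e^{is}$ is not absolutely integrable at infinity (its integral is only conditionally convergent), so dominated convergence cannot be invoked; and the error $\Phi(t):=\psi(t)-\gamma(t)=o(|t|^{-1/\delta})$ only gives $\int_{|t|\le\varepsilon}|\Phi(t)|\,dt=O(1)$, a bound independent of $a$, which is useless against a target of $o(a^{1/\delta-1})$ with $a^{1/\delta-1}\to 0$. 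One must extract cancellation from $e^{ita}$ against $\Phi$, and that requires \emph{regularity} of $\Phi$, not just its size. Your appeal to ``integration by parts'' is circular here: differentiating $\hat h(t)\Phi(t)e^{ita}$ produces $\Phi'$, which you never estimate. This is exactly why the paper proves the derivative asymptotics (\ref{lim-psi-deri}) of Proposition~\ref{series-2} (requiring the second halves of Lemmas~\ref{kaa}, \ref{UI} and \ref{cvps}), and why its Section~4 decomposes the integral into $I_1+I_2+I_3$: $I_1$ is computed exactly through Lemma~\ref{fourier-dist} (residue theorem) and the approximate-identity Lemma~\ref{id-approche}, while $I_2$ is killed via $J_1$ (integration by parts, using $\psi'$) and $J_{2,2}$ (the half-period shift $t\mapsto t+\frac{\pi}{a}\mathrm{sgn}(t)$ combined with Taylor bounds on $(\chi\Phi)'$, i.e.\ on $\psi'-\gamma'$). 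The paper's closing remark states the point explicitly: the transient case hinges on precise estimation of the derivative of $\psi$ — the ingredient absent from your proposal. (A minor instance of the same omission: your bound away from the origin also needs $\psi$ differentiable with bounded derivative there, i.e.\ the second assertion of Proposition~\ref{series-1-trans}.)
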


\subsection{Preliminaries to the proofs}
In our proofs, we will use Fourier transforms for some $h:{\mathbb R}\rightarrow{\mathbb C}$
or $h:{\mathbb Z}\rightarrow{\mathbb C}$ and, more precisely,
the following fact
$$2\pi{\mathbb E}[h(Z_n-a)]=\int_{\mathcal I}\hat h(t) {\mathbb E}[e^{i tZ_n}]e^{-iat}\, dt.$$
This will lead us to the study of
$\sum_{n\ge 1}{\mathbb E}[e^{itZ_n}]$. Therefore it will be crucial to observe that we have
$$\forall t\in{\mathbb R},\ \forall n\ge 1,\ {\mathbb E}[e^{it Z_n}]={\mathbb E}
   \left[\prod_{y\in\mathbb Z}e^{it\xi_yN_n(y)}\right]
    ={\mathbb E}\left[\prod_{y\in\mathbb Z}\varphi_\xi(tN_n(y))\right], $$ 
since, taken $(S_k)_{k\le n}$, $(\xi_y)_y$ is a sequence of iid random variables
with characteristic function $\varphi_\xi$. 
Let us notice that, in the particular case when $\xi_0$ has the stable distribution
given by characteristic function (\ref{FC}), the
quantity $\sum_{n\ge 1}{\mathbb E}[e^{itZ_n}]$ is equal to
$$  \psi(t):=\sum_{n\ge 1}{\mathbb E}\bigg[\prod_{y\in\mathbb Z}
   e^{-|t|^\beta N_n(y)^\beta(A_1+i A_2\text{sgn}(t) )}\bigg].$$
Section 2 is devoted to the study of this series thanks to which, 
we prove Theorem \ref{threc2} in Section 3  and Theorem \ref{threc0} in Section 4.
\section{Study of the series $\Psi$}
Let us notice that we have, for every real number $t\neq 0$,
\begin{equation}\label{def-psi}
\psi(t) =\sum_{n\ge 1} {\mathbb E}[e^{-|t|^\beta V_n (A_1+i A_2\text{sgn}(t) )}],
\end{equation}
with $V_n:=\sum_{y\in\mathbb Z} N_n(y)^\beta$. 
Let us observe that $\frac{N_n(y)}{n} \leq (\frac{N_n(y)}{n})^\beta\le\frac {N_n(y)}{n^\beta}$ 
if $\beta \leq 1$, and 
$\frac{ N_n(y)}{n^\beta}\le 
   (\frac{N_n(y)}{n})^\beta \leq \frac{N_n(y)}{n}$ if $\beta > 1$.
Combining this with the fact that $\sum_{y\in\mathbb Z} N_n(y)=n$, we obtain: 
\begin{subequations}
\begin{eqnarray}
& & \beta \leq 1\ \Rightarrow\ n^\beta\leq V_n \leq n  \label{Vn-beta-less1} \\
& &  \beta \geq 1\ \Rightarrow\ n\leq V_n \leq n^\beta  \label{Vn-beta-less2}.
\end{eqnarray}
\end{subequations}
\begin{prop}\label{series-1-trans}
When $\beta\in (0,2]$,  for every $r\in(0,+\infty)$, the function $\psi$ is bounded on the set $\{t\in {\mathbb R} : |t|\geq r\}$.\\*
When $\beta\in (0,1)$,  the function $\psi$ is differentiable on $\mathbb R\setminus\{0\}$, and for every $r\in(0,+\infty)$,  its derivative $\psi'$ is bounded on the set $\{t\in {\mathbb R} : |t|\geq r\}$. 
\end{prop}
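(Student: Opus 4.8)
The plan is to estimate the series $\psi(t)=\sum_{n\ge 1}{\mathbb E}[e^{-|t|^\beta V_n(A_1+iA_2\mathrm{sgn}(t))}]$ by controlling the modulus of each summand and showing the resulting series converges uniformly for $|t|\ge r$. Since $|e^{-|t|^\beta V_n(A_1+iA_2\mathrm{sgn}(t))}|=e^{-|t|^\beta V_n A_1}$ with $A_1>0$, and the bound depends only on the random variable $V_n$, the whole estimate reduces to understanding how fast $V_n=\sum_{y}N_n(y)^\beta$ grows. The key inputs are the deterministic two-sided bounds \eqref{Vn-beta-less1} and \eqref{Vn-beta-less2}, which give $V_n\ge n^{\min(\beta,1)}$. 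The strategy is therefore to bound ${\mathbb E}[e^{-|t|^\beta V_n A_1}]$ from above by a quantity that is summable in $n$ and uniformly controlled for $|t|\ge r$.

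\textbf{Boundedness of $\psi$.} First I would write, for any $t$ with $|t|\ge r$,
$$\bigl|{\mathbb E}[e^{-|t|^\beta V_n(A_1+iA_2\mathrm{sgn}(t))}]\bigr|\le {\mathbb E}[e^{-|t|^\beta V_n A_1}]\le {\mathbb E}[e^{-r^\beta V_n A_1}].$$
It then suffices to show $\sum_{n\ge1}{\mathbb E}[e^{-r^\beta V_n A_1}]<\infty$. Here the two regimes split. When $\beta\ge 1$, \eqref{Vn-beta-less2} gives $V_n\ge n$, so the summand is at most $e^{-r^\beta A_1 n}$, a convergent geometric series; summing gives a finite bound depending only on $r$ (and $A_1,\beta$), uniform in $t$ with $|t|\ge r$. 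When $\beta<1$, \eqref{Vn-beta-less1} gives $V_n\ge n^\beta$, so the summand is at most $e^{-r^\beta A_1 n^\beta}$, and $\sum_n e^{-c n^\beta}<\infty$ for any $c>0$; this again yields a finite, $t$-uniform bound. In both cases the bound is independent of $n$-by-$n$ randomness because it is purely deterministic after invoking the lower bounds on $V_n$, so $\psi$ is bounded on $\{|t|\ge r\}$ for all $\beta\in(0,2]$.

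\textbf{Differentiability of $\psi$ when $\beta\in(0,1)$.} For the second assertion I would differentiate term by term. Each summand $f_n(t):={\mathbb E}[e^{-|t|^\beta V_n(A_1+iA_2\mathrm{sgn}(t))}]$ is differentiable in $t$ on $\mathbb R\setminus\{0\}$ (the exponent is smooth in $|t|^\beta$, and $\mathrm{sgn}(t)$ is locally constant there), with
$$f_n'(t)=-{\mathbb E}\bigl[\beta|t|^{\beta-1}\mathrm{sgn}(t)\,V_n(A_1+iA_2\mathrm{sgn}(t))\,e^{-|t|^\beta V_n(A_1+iA_2\mathrm{sgn}(t))}\bigr].$$
Taking moduli, and using $|A_1+iA_2\mathrm{sgn}(t)|=(A_1^2+A_2^2)^{1/2}$, we get for $|t|\ge r$
$$|f_n'(t)|\le \beta r^{\beta-1}(A_1^2+A_2^2)^{1/2}\,{\mathbb E}\bigl[V_n\,e^{-r^\beta V_n A_1}\bigr].$$
The point is that $V_n\,e^{-r^\beta V_n A_1}$ is again controlled by the deterministic lower bound $V_n\ge n^\beta$ (for $\beta<1$): on the increasing range of $V_n$ the map $v\mapsto v e^{-cv}$ is eventually decreasing, so ${\mathbb E}[V_n e^{-r^\beta A_1 V_n}]\le C\, n^\beta e^{-r^\beta A_1 n^\beta}$ for large $n$, and $\sum_n n^\beta e^{-c n^\beta}<\infty$. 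Hence $\sum_n f_n'$ converges uniformly on $\{|t|\ge r\}$, which (together with convergence of $\sum_n f_n$) legitimizes term-by-term differentiation and shows $\psi'$ is bounded there.

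\textbf{Main obstacle.} The one subtlety I expect is justifying the bound $v\mapsto v e^{-cv}$ argument at the level of the expectation: since $V_n$ is random, I cannot directly substitute the deterministic lower bound into the increasing factor $V_n$ and the decreasing factor $e^{-cV_n}$ simultaneously. The clean way around this is to bound $V_n\,e^{-r^\beta A_1 V_n}\le \sup_{v\ge 0}\bigl(v\,e^{-\frac12 r^\beta A_1 v}\bigr)\,e^{-\frac12 r^\beta A_1 V_n}=\frac{C}{r^\beta}\,e^{-\frac12 r^\beta A_1 V_n}$, splitting the exponential to absorb the polynomial factor into a bounded constant; then one applies the deterministic lower bound $V_n\ge n^\beta$ only inside the remaining exponential $e^{-\frac12 r^\beta A_1 V_n}\le e^{-\frac12 r^\beta A_1 n^\beta}$, and sums. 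This reduces both parts to the same summability fact $\sum_n e^{-c n^{\min(\beta,1)}}<\infty$, and no finer information about the law of $N_n(\cdot)$ is needed.
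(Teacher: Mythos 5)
Your proposal is correct and follows essentially the same route as the paper: bound each summand in modulus by $e^{-A_1 r^\beta n^{1\wedge\beta}}$ via the deterministic bounds on $V_n$, then differentiate term by term and sum a series of the form $\sum_n n\, e^{-A_1(rn)^\beta}$. The only cosmetic difference is in handling the prefactor $V_n$ in the derivative estimate: the paper simply uses $V_n\le n$ (valid since $\beta<1$) together with $V_n\ge n^\beta$ to get $V_n e^{-cV_n}\le n\,e^{-cn^\beta}$, whereas you absorb the polynomial factor by splitting the exponential --- both are trivially equivalent fixes to the non-monotonicity issue you flagged.
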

\begin{proof}
Let $r>0$. Then: $|t|\geq r\ \Rightarrow\ |\psi(t)| \leq  \sum_{n\geq 1} e^{-A_1 r^\beta n^{1\wedge \beta}}$, so the first assertion is proved. Next, when $\beta\in (0,1)$, since $\sum_{n\geq 1} n\, e^{-A_1(rn)^\beta} < \infty$, it easily follows from Lebesgue's theorem that $\psi$ is differentiable on $\{t\in {\mathbb R} : |t|\geq r\}$, with $|\psi'(t)| \leq  \beta r^{\beta-1}(A_1+|A_2|) \sum_{n\geq 1} n\, e^{-A_1 (rn)^\beta}$ when $|t|\geq r$.  
\end{proof}
In the particular case when $\beta=1$, we have $A_2=0$ and
$$\psi(t)=\frac 1{e^{A_1|t|} -1}\sim_{t\rightarrow 0}\gamma(t),\ \
    \mbox{with}\ \ \gamma(t):={A_1}^{-1}|t|^{-1}.$$
When $\beta\ne 1$, the expression of $\psi(t)$ is not so simple. We will need some estimates
to prove our results. Recall that the constant $c$ is defined in (\ref{cc}) if $\alpha=1$ and set
$$C:= \frac{1}{\delta\beta}Ê\Gamma( \frac{1}{\delta\beta}) {\mathbb E}[ |L|_\beta^{-1/\delta}].  $$
\begin{prop}\label{series-2}
When $\alpha>1, \beta\ne 1$, we have
\begin{equation}
\label{lim-psi} 
\lim_{t\rightarrow 0} \frac{\psi(t)}{\gamma(t)}=1
\end{equation} and
\begin{equation}
\label{lim-psi-deri}
\lim_{t\rightarrow 0} \frac{\psi'(t)}{\gamma'(t)}=1,
\end{equation}
where $\gamma$ is the function defined by
$$ \gamma(t):=C |t|^{-1/\delta}  (A_1+ iA_2\, \mbox{\rm sgn}(t))^{-1/(\delta\beta)}.$$
When $\alpha=1$ and $\beta>1$, we have
\begin{equation}
\label{lim-psi2} 
\lim_{t\rightarrow 0} \frac{\psi(t)}{\gamma(t)}=1,
\end{equation}
where $\gamma$ is the function defined by $$\gamma(t):=\frac{(-\log (|t|^\beta) )
   ^{1-\beta}}{c |t|^{\beta} (A_1+ iA_2\, \mbox{\rm sgn}(t)) }. $$
\end{prop}

\noindent To prove Proposition~\ref{series-2}, we need some preliminaries lemmas. Let us define
\begin{equation}\label{bn}
b_n:=n^{\delta}\ \mbox{if}\ \alpha>1\ \ \ \mbox{and}\ \ \ 
 b_n:= n^{\frac 1\beta}(\log(n))^{1-\frac 1\beta}\ \mbox{if}\ \alpha=1.
\end{equation}
We first recall some facts on the behaviour of the sequence 
$\left(b_n^{-1}V_n^{1/\beta}\right)_n$. 
\begin{lem} [Lemma 6 in \cite{KestenSpitzer}, Lemma 5 in \cite{FFN}]\label{Vn}
When $\alpha>1$, the sequence of random variables $\left(b_n^{-1}V_n^{1/\beta}
\right)_n$
converges in distribution to $|L|_\beta=\left( \int_{\mathbb R} L_1^\beta(x)\,
dx\right)^{1/\beta} $. 

When $\alpha=1$, the sequence of random variables $\left(b_n^{-1} V_n^{1/\beta}
\right)_n$
converges almost surely to $c^{\frac 1\beta}$. 
\end{lem}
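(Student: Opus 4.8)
The plan is to treat the two regimes separately, since they are governed by entirely different mechanisms: for $\alpha>1$ the limit $|L|_\beta$ is genuinely random and arises from a functional (local-time) convergence, whereas for $\alpha=1$ the limit $c^{1/\beta}$ is deterministic and is produced by a law of large numbers. The dichotomy reflects a structural fact about $U$: its characteristic exponent grows like $|u|^\alpha$, so $\int_{\mathbb R}(1+\mathrm{Re}\,\psi_U(u))^{-1}\,du$ is finite precisely when $\alpha>1$; thus $U$ has a jointly continuous local time $L_t(x)$ when $\alpha\in(1,2]$ but none when $\alpha=1$. This is why there is a random limit to converge to in the first case and none in the second. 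In both cases the exponent defining $b_n$ comes from the elementary scaling heuristic: the walk visits $\asymp n^{1/\alpha}$ (resp.\ $\asymp n/\log n$) sites, spending $\asymp n^{1-1/\alpha}$ (resp.\ $\asymp\log n$) units of time at a typical site, and $V_n=\sum_y N_n(y)^\beta$ is then of order (number of sites)$\,\times\,$(typical time)$^\beta$, which is exactly $b_n^\beta$.

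For $\alpha>1$ I would introduce the rescaled local-time field $\ell_n(x):=n^{-(1-1/\alpha)}\,N_n\big(\lfloor n^{1/\alpha}x\rfloor\big)$, a nonnegative step function, and observe that, since it is piecewise constant on intervals of length $n^{-1/\alpha}$, one has the exact identity
$$n^{-\delta\beta}V_n=n^{-1/\alpha}\sum_{y\in\mathbb Z}\big(n^{-(1-1/\alpha)}N_n(y)\big)^\beta=\int_{\mathbb R}\ell_n(x)^\beta\,dx,$$
where I used $\delta\beta=\beta(1-1/\alpha)+1/\alpha$. The engine of the proof is the convergence of $\ell_n$ to $L_1$ in a topology on which $f\mapsto\int f^\beta$ is continuous, namely uniform convergence together with a uniform bound on the supports. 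I would establish this local-time convergence by the method of moments for the finite-dimensional laws of $\ell_n$ (computing $\EE[\prod_j N_n(y_j)]$ via the Green's function of the walk and passing to the limit through the functional CLT $n^{-1/\alpha}S_{\lfloor n\cdot\rfloor}\Rightarrow U$), supplemented by a tightness estimate on the modulus of continuity of $\ell_n$. The support of $\ell_n$ is controlled by the tightness of $n^{-1/\alpha}\max_{k\le n}|S_k|$, which follows from the same functional convergence and makes the tail $\sum_{|y|>Mn^{1/\alpha}}N_n(y)^\beta$ uniformly negligible. The continuous mapping theorem then gives $n^{-\delta\beta}V_n\Rightarrow\int_{\mathbb R}L_1^\beta=|L|_\beta^\beta$, and taking $\beta$-th roots yields the claim.

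For $\alpha=1$ I would prove an almost-sure law of large numbers by a first- and second-moment argument closed off by monotonicity. Since the single-step characteristic function satisfies $\varphi_S(t)\sim e^{-a_0|t|}$ near $0$, Fourier inversion gives $\sum_{k\le n}\PP[S_k=0]\sim(\pi a_0)^{-1}\log n$, and a Darling--Kac type argument (slowly varying return sequence, index $0$) shows that $N_n(0)\big/\big((\pi a_0)^{-1}\log n\big)$ converges to a unit-mean exponential law with convergence of the $\beta$-th moment to $\Gamma(\beta+1)$. Carrying this occupation-sum computation through for $\EE[V_n]=\sum_y\EE[N_n(y)^\beta]$ I expect $\EE[V_n]\sim c\,n(\log n)^{\beta-1}$ with exactly $c=(\pi a_0)^{1-\beta}\Gamma(\beta+1)$, the two powers of $\log n$ recombining as in the heuristic above. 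The crucial step is then $\var(V_n)=o\big((\EE[V_n])^2\big)$, which reduces to two-point bounds on $\EE[N_n(y)^\beta N_n(z)^\beta]$ and hence to Green's-function estimates. Granting such a bound along a geometric subsequence $n_k=\lfloor\rho^k\rfloor$, Chebyshev and Borel--Cantelli give $V_{n_k}/b_{n_k}^\beta\to c$ almost surely; since $n\mapsto V_n$ is nondecreasing (each $N_n(y)$ is) and $b_n^\beta=n(\log n)^{\beta-1}$ is regularly varying, a squeeze between consecutive $n_k$ followed by $\rho\downarrow1$ upgrades this to full almost-sure convergence of $V_n/b_n^\beta$, whence $b_n^{-1}V_n^{1/\beta}\to c^{1/\beta}$.

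The main obstacle differs by regime. For $\alpha>1$ it is the functional convergence of the field $\ell_n$ in a topology strong enough to pass $\int f^\beta$ to the limit, together with the uniform tail and support control; this is the genuinely hard analytic input and is the substance of the cited Kesten--Spitzer lemma. For $\alpha=1$ it is the second-moment estimate $\var(V_n)=o((\EE[V_n])^2)$, which demands sharp two-point local-time correlations; by contrast the first-moment asymptotics, though it carries the exact constant $c$, is a comparatively routine Fourier and Darling--Kac computation.
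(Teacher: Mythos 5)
The paper does not prove this lemma at all: it is imported verbatim, the $\alpha>1$ half from Lemma 6 of Kesten--Spitzer \cite{KestenSpitzer} and the $\alpha=1$ half from Lemma 5 of \cite{FFN}, so your attempt has to be measured against those proofs. Your scaling bookkeeping is correct (the identity $n^{-\delta\beta}V_n=\int_{\mathbb R}\ell_n(x)^\beta\,dx$, the value of $c$, the heuristic for $b_n$), but the $\alpha=1$ half contains a genuine logical gap. You declare the crucial step to be $\var(V_n)=o\big((\EE[V_n])^2\big)$ and then invoke Chebyshev and Borel--Cantelli along $n_k=\lfloor\rho^k\rfloor$; this does not close. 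An $o(\cdot)$ bound yields only convergence in probability, while Borel--Cantelli requires $\sum_k \var(V_{n_k})/(\EE[V_{n_k}])^2<\infty$, and since $\log n_k\asymp k$ this means a quantified rate such as $O((\log n)^{-1-\epsilon})$. That is exactly where the difficulty sits: for the closely analogous range problem the first-generation variance bounds (Dvoretzky--Erd\"os) decay only like $\log\log n/\log n$, which is not summable along geometric subsequences, and you cannot simply sparsify the subsequence because then $b_{n_{k+1}}^\beta/b_{n_k}^\beta\to\infty$ and your monotonicity squeeze no longer controls the intermediate $n$; rescuing the scheme needs either sharper $O((\log n)^{-2})$-type correlation estimates in the spirit of Jain--Pruitt and Le Gall--Rosen \cite{LGR}, or an additivity decomposition of $V_n$ over time blocks exploiting $(a+b)^\beta\ge a^\beta+b^\beta$ (resp.\ $\le$ for $\beta\le 1$). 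All of this is compressed into your single word ``granting'': the actual content of the cited Lemma 5 of \cite{FFN} is assumed rather than proved.

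For $\alpha>1$ your architecture is plausible, but the engine you install is both stronger than necessary and misattributed. Kesten and Spitzer do \emph{not} prove (and do not need) weak convergence of the field $\ell_n$ to $L_1$ in the uniform topology; their Lemma 6 combines convergence of block-averaged occupation integrals of the form $n^{-1}\sum_{y\in n^{1/\alpha}[a,b]}N_n(y)$, which are continuous functionals of the rescaled path and converge by the functional limit theorem alone, with $L^2$-H\"older estimates on the increments of $y\mapsto N_n(y)$ that let them compare $\sum_y N_n(y)^\beta$ with its blocked version -- thereby bypassing any uniform local-time invariance principle. Your moments-plus-modulus-of-continuity program for uniform convergence of $\ell_n$ is a known but genuinely hard theorem in its own right, and, as stated, it is false without an aperiodicity (span $1$) hypothesis on $X_1$: if $X_1$ is supported on $d\mathbb Z$ with $d>1$, your $\ell_n$ vanishes on a positive fraction of every interval and cannot converge uniformly to the continuous $L_1$, whereas the blocked $L^2$ argument is insensitive to this. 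So the $\alpha>1$ half is an acceptable reduction to a heavy citable input (though not the route of the cited proof), while the $\alpha=1$ half, as written, would fail at the Borel--Cantelli step.
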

\begin{lem}[Lemma 11 in \cite{BFFN}, Lemma 16 in \cite{FFN}]\label{Vn2}
If $\beta >1$, then 
$$\sup_n \mathbb{E}\left[ \left( \frac{b_n} {V_n^{1/\beta}}\right)^{\beta/(\beta-1)} \right] <+\infty.$$
If $\beta\le 1$, then  for every $p\geq 1$, 
$$\sup_n \mathbb{E}\left[ \left( \frac{b_n} {V_n^{1/\beta}}\right)^{p} \right] <+\infty.$$
\end{lem}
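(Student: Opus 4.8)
The plan is to bound the negative moments of $V_n^{1/\beta}/b_n$ by \emph{positive} moments of simpler functionals of the local times, using only the constraint $\sum_{y\in\mathbb Z}N_n(y)=n$ together with the convexity (resp.\ concavity) of $x\mapsto x^\beta$. Write $R_n:=\#\{y:N_n(y)>0\}$ for the range and $N_n^*:=\max_{y}N_n(y)$ for the maximal local time. The two regimes $\beta>1$ and $\beta\le 1$ call for opposite deterministic inequalities; after applying them, the statement reduces in each case to a known moment estimate for $R_n$ or for $N_n^*$, and to the verification that the powers of $n$ (and of $\log n$ when $\alpha=1$) cancel exactly against the normalisation $b_n$.

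For $\beta>1$ I would use convexity. Applying Jensen's inequality to the $R_n$ nonzero values $N_n(y)$, whose sum is $n$, gives $V_n=\sum_y N_n(y)^\beta\ge R_n\,(n/R_n)^\beta=n^\beta R_n^{1-\beta}$, whence
\[
\Big(\tfrac{b_n}{V_n^{1/\beta}}\Big)^{\beta/(\beta-1)}\le \Big(\tfrac{b_n}{n}\Big)^{\beta/(\beta-1)}R_n .
\]
A direct computation shows $(b_n/n)^{\beta/(\beta-1)}=n^{-1/\alpha}$ when $\alpha>1$ and $=n^{-1}\log n$ when $\alpha=1$. It then suffices to invoke the classical estimate for the expected range, $\mathbb{E}[R_n]\le C\,n^{1/\alpha}$ for $\alpha>1$ and $\mathbb{E}[R_n]\le C\,n/\log n$ for $\alpha=1$, which follows from the local-limit bounds for $\mathbb{P}[S_k=0]$ together with $\mathbb{E}[R_n]\asymp n/\sum_{k\le n}\mathbb{P}[S_k=0]$. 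The product is then bounded uniformly in $n$.

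For $\beta\le 1$ convexity is reversed, so instead I would bound each term from below by the maximal local time: since $\beta-1\le 0$ and $N_n(y)\le N_n^*$, one has $N_n(y)^\beta=N_n(y)\,N_n(y)^{\beta-1}\ge N_n(y)\,(N_n^*)^{\beta-1}$, and summing over $y$ yields $V_n\ge n\,(N_n^*)^{\beta-1}$. Consequently
\[
\Big(\tfrac{b_n}{V_n^{1/\beta}}\Big)^{p}\le \big(b_n\,n^{-1/\beta}\big)^{p}\,(N_n^*)^{p(1-\beta)/\beta},
\]
where $b_n n^{-1/\beta}=n^{-(1-1/\alpha)(1-\beta)/\beta}$ for $\alpha>1$ and $=(\log n)^{-(1-\beta)/\beta}$ for $\alpha=1$. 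Taking expectations, the claim reduces to the uniform bounds $\sup_n\mathbb{E}\big[(N_n^*/\mu_n)^{q}\big]<\infty$ for every $q\ge 1$, with $\mu_n=n^{1-1/\alpha}$ ($\alpha>1$) or $\mu_n=\log n$ ($\alpha=1$); one then checks that the surviving powers of $n$ (and $\log n$) cancel, keeping the right-hand side bounded. Note that this lower bound is sharp in the typical regime, where the mass is spread out and $N_n^*\asymp\mu_n$ forces $V_n\gtrsim n\mu_n^{\beta-1}\asymp b_n^\beta$.

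The main obstacle is precisely these last moment estimates for the relevant functional — the expected range for $\beta>1$ (classical) and all moments of the normalised maximal local time $N_n^*/\mu_n$ for $\beta\le 1$. For the latter I would control the upper tail $\mathbb{P}[N_n(y)\ge k]$ through a first-return (excursion) decomposition: the successive return times to a fixed site are i.i.d.\ and heavy-tailed, so $N_n(y)$ is a renewal counting variable whose upper tail decays faster than any power of $k/\mu_n$, uniformly in $n$, by an exponential (Laplace-transform) Markov bound; a union bound over the $\asymp n/\mu_n$ visited sites then transfers this to $N_n^*$ and yields finiteness of all moments of $N_n^*/\mu_n$. The case $\alpha=1$ is analogous but delicate, the heavy-tail index degenerating to a slowly varying correction so that the relevant scale is logarithmic; handling this degeneracy carefully is where the real work lies.
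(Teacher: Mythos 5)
First, a point of reference: the paper itself does not prove this lemma — it imports it verbatim (Lemma 11 of \cite{BFFN}, Lemma 16 of \cite{FFN}), so your attempt must be compared with the proofs in those papers. Your $\beta>1$ half is correct and is in substance the cited argument: Jensen over the $R_n$ occupied sites gives $V_n\ge n^\beta R_n^{1-\beta}$, the exponent $\beta/(\beta-1)$ in the statement is calibrated exactly so that only the \emph{first} moment of the range is needed, and $\mathbb{E}[R_n]=O(n^{1/\alpha})$ for $\alpha\in(1,2]$, $O(n/\log n)$ for $\alpha=1$, is classical (Le Gall--Rosen \cite{LGR}; your $\mathbb{E}[R_n]\asymp n/\sum_{k\le n}\mathbb{P}[S_k=0]$ heuristic is the right one).

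The $\beta\le 1$ half, however, has a genuine gap, and it sits exactly where you located ``the real work.'' (i) For $\alpha=1$ the statement you reduce to is \emph{false}: $N_n^*$ lives on scale $(\log n)^2$, not $\log n$. Indeed $\mathbb{P}[N_m(0)\ge k]\ge(1-\gamma_{m/k})^k$ with $\gamma_m\asymp 1/\log m$, so $\mathbb{P}\big[N_{\sqrt n}(0)\ge \varepsilon(\log n)^2\big]\ge n^{-1/2+\eta}$ for $\varepsilon$ small; splitting $[0,n]$ into $\sqrt n$ blocks of length $\sqrt n$ and looking at the local time of each block's starting site within its block produces $\sqrt n$ i.i.d.\ such trials, whence $N_n^*\ge \varepsilon(\log n)^2$ with probability tending to $1$, and $\mathbb{E}\big[(N_n^*/\log n)^q\big]\to\infty$. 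Consequently your deterministic bound $V_n\ge n(N_n^*)^{\beta-1}$ is genuinely lossy when $\alpha=1$: it yields only $V_n\gtrsim n(\log n)^{2(\beta-1)}$, while the lemma requires $V_n\gtrsim b_n^\beta= n(\log n)^{\beta-1}$; no tail estimate on $N_n^*$ can repair a reduction whose right-hand side undershoots the truth by $(\log n)^{1-\beta}$. (ii) Even for $\alpha>1$, where $\sup_n\mathbb{E}\big[(N_n^*/n^{1-1/\alpha})^q\big]<\infty$ is plausibly true, your proposed proof cannot deliver it: the per-site Laplace bound gives a (stretched-)exponential tail $e^{-c\lambda^{\alpha/(\alpha-1)}}$ for $\mathbb{P}[N_n(y)\ge\lambda n^{1-1/\alpha}]$, but the union bound leaves the prefactor $\mathbb{E}[R_n]\asymp n^{1/\alpha}$, so after integrating the tail one only gets moments growing like a power of $\log n$. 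Since the cancellation of powers of $n$ in your reduction is \emph{exact}, any logarithmic loss destroys the uniform bound; removing it requires exploiting the strong dependence between local times at different sites (Ray--Knight-type control of the whole profile), which is a substantial missing piece, not a routine verification.

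For comparison, the cited proofs avoid $N_n^*$ altogether in the case $\beta\le1$ and instead interpolate between $V_n$ and higher self-intersection local times: writing $N_n(y)=N_n(y)^{\beta/r}N_n(y)^{1-\beta/r}$ and applying H\"older with $r=(m-\beta)/(m-1)$ for an integer $m\ge2$ gives
$$n=\sum_{y\in\mathbb{Z}}N_n(y)\ \le\ V_n^{1/r}\Big(\sum_{y\in\mathbb{Z}}N_n(y)^m\Big)^{(r-1)/r},$$
so negative moments of $V_n$ are controlled by \emph{positive} moments of $\sum_y N_n(y)^m$, which are computable (of order $n^{m-(m-1)/\alpha}$ for $\alpha>1$, $n(\log n)^{m-1}$ for $\alpha=1$); one checks that the powers of $n$ and of $\log n$ then cancel exactly, which is precisely the identity defining $\delta$ in (\ref{def-delta}). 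This route handles $\alpha=1$ correctly because it never replaces the occupation profile by its maximum.
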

\noindent The idea will be that $V_n$ is of order $b_n^{\beta}$.
Therefore the study of $\sum_{n\ge 1}e^{-|tb_n|^\beta(A_1+iA_2\, sgn(t)) }$
will be useful in the study of $\psi(t)$. 
For any function $g: \RR_+\rightarrow \RR$,  we denote by ${\mathcal L}(g)$ the Laplace transform of 
$g$ given, for every $z\in\CC$ with $\mbox{\rm Re}(z)>0$, by
$${\mathcal L}(g) (z) =\int_{0}^{+\infty} e^{-z t} g(t) dt,$$
when it is well defined.
\begin{lem}\label{kaa}
When $\alpha>1$,  for every complex number $z$ such that
$\text{\rm Re}(z)>0$ and every $p\ge 0$, we have
$$ K_{p,\alpha}(z):= \sup_{u>0}\left\vert \sum_{n\ge 1} e^{- zu^{\delta\beta}b_n^\beta}
(u^{\delta\beta}b_n^\beta)^{p}  -  \frac{1}{u\delta \beta} \Gamma(p+\frac{1}{\delta\beta}) z^{-(p+\frac{1}{\delta\beta})} \right\vert <+\infty.$$
When  $\alpha=1$, for every complex number $z$ such that
$\text{\rm Re}(z)>0$ and every $p\ge 0$, we have
$$ K_{p,1}(z):= \sup_{u>0}\left\vert \sum_{n\ge 1} e^{- z u b_n^\beta}
(u b_n^\beta)^{p}  -  u^p \mathcal{L}(\tilde{w}_p) (zu) \right\vert <+\infty,$$
where $\tilde w_p(t):=\tilde w_0(t) t^p$
with
$$\tilde w_0(t) :=\frac{(\beta-1)^{1-\beta} w\left(\frac{t^{\frac 1{\beta-1}}}
  {\beta-1}\right)^{2-\beta}}{1+w\left(\frac{t^{\frac 1{\beta-1}}}
  {\beta-1}\right)} \ \ \mbox{if}\ \ \beta>1$$
and 
$$\tilde w_0(t):={\mathbf 1}_{[(e/(1-\beta))^{1-\beta};+\infty)}(t)
   \frac{\Delta((1-\beta)t^{\frac 1{1-\beta}})
    ^{2-\beta}(1-\beta)^{1-\beta}}
  {\Delta((1-\beta)t^{\frac 1{1-\beta}})-1}\ \ \mbox{if}\ \ \beta<1.$$ 
Here $w$ is the Lambert function defined on $[0;+\infty)$ as the
inverse function of $y\mapsto y e^y$ (defined on $[0;+\infty)$)
and $\Delta$ is the function defined on $[e;+\infty)$ as the inverse function 
of $y\mapsto e^y/y$ defined
on $[1;+\infty)$.
\end{lem}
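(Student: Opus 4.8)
The plan is to read Lemma \ref{kaa} as a quantitative Riemann-sum comparison that is \emph{uniform} in the scaling parameter $u$. In both cases the $n$-th summand has the shape $g(b_n^\beta)$ for a smooth, exponentially decaying $g$ (recall $\mathrm{Re}(z)>0$), and the asserted ``main term'' is precisely the associated integral; thus the whole statement reduces to controlling a sum-minus-integral discrepancy, uniformly in $u$, the uniformity being the conceptual heart of the matter.

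I begin with $\alpha>1$. Here $b_n^\beta=n^{\delta\beta}$, so $u^{\delta\beta}b_n^\beta=(un)^{\delta\beta}$ and the summand equals $h(un)$ with $h(y):=e^{-zy^{\delta\beta}}y^{\delta\beta p}$. Writing $\frac1u\int_0^\infty h=\sum_{n\ge1}\int_{n-1}^n h(ux)\,dx$ and using the elementary bound $\big|h(un)-\int_{n-1}^n h(ux)\,dx\big|\le\int_{n-1}^n u\,|h'(ux)|\,dx$ (valid for complex-valued $C^1$ functions, since $h(un)-h(ux)=\int_x^n u\,h'(ut)\,dt$), summation over $n$ followed by the substitution $y=ux$ yields
$$\Big|\sum_{n\ge1}h(un)-\frac1u\int_0^\infty h(y)\,dy\Big|\le\int_0^\infty|h'(y)|\,dy,$$
a bound free of $u$; the last integral is finite because $h$ decays exponentially at infinity and $h'(y)=O\big(y^{\delta\beta p-1}\big)$ near $0$. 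Finally the substitution $w=y^{\delta\beta}$ turns $\frac1u\int_0^\infty h$ into $\frac{1}{u\delta\beta}\Gamma\big(p+\frac1{\delta\beta}\big)z^{-(p+\frac1{\delta\beta})}$, which is the announced main term, settling the first assertion.

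Next I treat $\alpha=1$, where $b_n^\beta=\phi(n)$ with $\phi(x):=x(\log x)^{\beta-1}$ and the summand is $g(\phi(n))$, $g(s):=e^{-zus}(us)^p$. The decisive step is to recognise $\tilde w_0$ as $(\phi^{-1})'$. When $\beta>1$, $\phi$ is an increasing bijection of $[1,\infty)$ onto $[0,\infty)$; solving $s=x(\log x)^{\beta-1}$ by setting $\log x=(\beta-1)\,w\big(\tfrac{s^{1/(\beta-1)}}{\beta-1}\big)$, with $w$ the Lambert function, and differentiating gives exactly $(\phi^{-1})'(s)=\tilde w_0(s)$, so that $\int_1^\infty g(\phi(x))\,dx=\int_0^\infty g(s)\tilde w_0(s)\,ds=u^p\mathcal L(\tilde w_p)(zu)$. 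The Riemann error is handled as before: since $\phi$ is monotone, the substitution $s=\phi(x)$ gives $\int_1^\infty\big|\tfrac{d}{dx}g(\phi(x))\big|\,dx=\int_0^\infty|g'(s)|\,ds$, and the further substitution $\tau=us$ identifies this with $\int_0^\infty\tau^{p-1}|p-z\tau|\,e^{-\mathrm{Re}(z)\tau}\,d\tau$, which is finite and independent of $u$. Adding the bounded, $u$-uniform $n=1$ term completes this sub-case.

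Finally, for $\alpha=1$ and $\beta<1$ the sole genuine complication is that $\phi$ fails to be monotone: it decreases on $(1,e^{1-\beta})$ and increases on $(e^{1-\beta},\infty)$, with minimum value $e^{1-\beta}(1-\beta)^{\beta-1}=(e/(1-\beta))^{1-\beta}$, precisely the left endpoint of the support of $\tilde w_0$. On the increasing branch the same inversion, now through $\Delta$ (the inverse of $y\mapsto e^y/y$), identifies $(\phi^{-1})'$ with $\tilde w_0$ and reproduces $u^p\mathcal L(\tilde w_p)(zu)$; the interval $(1,e^{1-\beta})$ contains only finitely many integers, whose contributions are dominated by $\sup_{w>0}w^p e^{-\mathrm{Re}(z)w}$ uniformly in $u$ and may be discarded, after which the monotone-branch estimate applies verbatim. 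I expect the main obstacle to be exactly this explicit inversion of $\phi$—verifying that the Lambert and $\Delta$ formulas for $\tilde w_0$ really are $(\phi^{-1})'$—together with the bookkeeping of the non-monotone branch when $\beta<1$; by contrast, the uniformity in $u$, which is what the supremum in the statement demands, drops out cleanly from the scaling substitution $\tau=us$.
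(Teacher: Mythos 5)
Your proposal is correct and follows essentially the same route as the paper: the main term is identified with the integral via the same changes of variables (including the Lambert-$w$ and $\Delta$ inversions, with the same increasing/decreasing branch analysis when $\beta<1$), and the supremum over $u$ is controlled by a sum-versus-integral discrepancy that becomes $u$-free after the scaling substitution. The only real difference is in packaging: where the paper bounds the discrepancy $E_p(u,x)$ through floor-function and Taylor estimates, you use the total-variation bound $\bigl|\sum_{n\ge1}h(un)-\frac1u\int_0^\infty h(y)\,dy\bigr|\le\int_0^\infty|h'(y)|\,dy$, which is a cleaner version of the same estimate — just note that for $p=0$ the correct order of $h'$ near $0$ is $O\bigl(y^{\delta\beta-1}\bigr)$ (the term $p\,y^{\delta\beta p-1}$ vanishes), since the order $O(y^{-1})$ that your formula gives at $p=0$ would not by itself yield integrability.
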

\begin{proof}
\underline{First, we consider the case when $\alpha>1$}. 
With the change of variable $y=(ux)^{\delta\beta}$, we get
$$\int_{0}^{+\infty}e^{-(ux)^{\delta\beta}z}(ux)^{p\delta\beta}\, dx = 
    \frac {1} {u\delta\beta } \int_0^{\infty} e^{-yz} y^{\frac{1}{\delta\beta}
      +p -1} dy =\frac 1{u\delta\beta} \Gamma \left(p+\frac{1}{\delta\beta}\right) 
   z^{-(p+\frac{1}{\delta\beta})}.$$
Let $\floor{x}$ denote the integer part of $x$. Observe that
$$ \sum_{n\ge 1} e^{-(un)^{\delta\beta}z}(un)^{p\delta\beta}=\int_{1}^{+\infty}
     e^{-(u\floor{x})^{\delta\beta}z}(u\lfloor x\rfloor)^{p\delta\beta}\, dx.$$
Let us write
$$E_p(u,x)=\left| e^{-(u\floor{x})^{\delta\beta}z} (u\lfloor x\rfloor)^{p\delta\beta}
   -e^{-(ux)^{\delta\beta}z}(ux)^{p\delta\beta}\right|.$$
By applying Taylor's inequality to the function $v\mapsto e^{-vz} v^p$
 on the interval $[(u\floor{x})^{\delta\beta},(ux)^{\delta\beta}]$, 
we obtain for every $x>2$ (use $\floor{x} \geq x/2$) 
$$E_p(u,x) \le
 (1+|z|)(1+p)(1+(ux)^{p\delta\beta}) e^{-(ux/2)^{\delta\beta}\text{\rm Re}(z)}
   \, u^{\delta\beta}\big(x^{\delta\beta} - \floor{x}^{\delta\beta}\big).$$
Next, by applying Taylor's inequality to the function $t\mapsto t^{\delta\beta}$ 
according that $\delta\beta>1$ or $\delta\beta<1$ 
(again use $\floor{x} \geq x/2$ in the last case), we have 
$$ E_p(u,x) \le (1+|z|) (1+p)(1+(ux)^{p\delta\beta})
   \max\big(1,2^{1-\delta\beta}\big)\, \delta\beta\, u^{\delta\beta}\, 
   e^{-(ux/2)^{\delta\beta} \text{\rm Re}(z)}\, x^{\delta\beta-1}.$$
Therefore, with the change of variable $t=(ux/2)^{\delta\beta}$,
we get
$$\int_2^{+\infty} E_p(u,x) \, dx
    \le (1+|z|) (1+p)
    \max\big(2^{\delta\beta} , 2\big) \int_0^{+\infty}(1+2^{p\delta\beta}t^p) 
    e^{-\text{\rm Re}(z) t}\, dt .$$

\underline{Now, we suppose that $\alpha=1$} and follow the same scheme. We observe that $\delta\beta=1$.
With the change of variable $t=x(\log x)^{\beta-1}$, we get
$$\int_{1}^{+\infty}e^{-zux(\log x)^{\beta-1}}(ux(\log x)^{\beta-1})^p\, dx=u^p{\mathcal L}(\tilde w_p)(zu). $$
Indeed, if $\beta>1$, 
we have $t=[(\beta-1)x^{\frac 1{\beta-1}}(\log (x^{\frac 1{\beta-1}}))]^{\beta-1}$
and so $x=\left[\exp\left(w\left(\frac{t^{\frac 1{\beta-1}}}{\beta-1}\right)\right)\right]^{\beta-1}$ which gives
$dx=\tilde w_0(t)\, dt$
(since $w'(y)=\frac{w(y)}{y(1+w(y))}$ and since $e^{w(x)}=\frac{x}{w(x)}$).
Moreover, if $\beta<1$, we have
$$t=\left[\frac{x^{\frac 1{1-\beta}}}  {(1-\beta)\log (x^{\frac 1{1-\beta}})}\right]^{1-\beta}
\ \mbox{and so}\ x=\left[\exp\left((1-\beta)\Delta\left((1-\beta)t^{\frac 1{1-\beta}}
\right)\right)\right],$$
which gives
$dx=\tilde w_0(t)\, dt$
(since $\Delta'(y)=\frac{\Delta(y)}{y(\Delta(y)-1)}$ and since $e^{\Delta(y)}=y\Delta(y)$).

Let $x_0:=\max(4,e^{2(1-\beta)})$.
We have
$$\left|\sum_{n\ge 1}e^{-zun(\log n)^{\beta-1}}(un(\log n)^{\beta-1})^p-
    u^p{\mathcal L}(\tilde w_p)(zu)\right|
   \le 2x_0\sup_y(e^{-Re(z)y}y^p)+\int_{x_0}^{+\infty}E_p(u,x)\, dx, $$
where $$E_p(u,x):=\left| e^{-(zu\floor{x})(\log \floor{x})^{\beta-1}}
      (u\floor{x}(\log \floor{x})^{\beta-1})^p
      -e^{-zux (\log x)^{\beta-1}} (ux(\log x)^{\beta-1})^p
    \right|.$$
Applying Taylor's inequality, 
we get
$$E_p(u,x) \le (1+|z|)(1+p)e^{-\frac{\text{\rm Re}(z) ux}4(\log x)^{\beta-1}}
  2^p(1+(2ux(\log x)^{\beta-1})^{p}) 8u
   (\beta-1 +\log(x))(\log x)^{\beta-2}$$
(using the fact that $\floor{x}\ge x/2$ and $\log(x)\ge\log \floor{x}\ge 
(\log x)/2\ge 1-\beta$ if $x\ge x_0$).
Hence there exists some $c$ depending only on $p$ and $|z|$
such that $\int_{x_0}^{+\infty} E_p(u,x)\, dx$ is less than
$$  c \int_{x_0}^{+\infty} (1+(2ux(\log x)^{\beta-1})^{p})e^{-\frac{\text{\rm Re}(z)ux}4(\log x)^{\beta-1}} u
   (\beta-1+\log(x))(\log x)^{\beta-2}\, dx.$$
With the change of variable 
$t=ux(\log x)^{\beta-1}$, for which we have $dt=u(\beta-1 +\log x)(\log x)^{\beta-2}\, dx$,
we get
$$ \int_{x_0}^{+\infty} E_p(u,x)\, dx
\le  c \int_{0}^{+\infty} (1+(2t)^{p})e^{-\frac{\text{\rm Re}(z) t}4}\, dt.$$
The last integral is finite.
\end{proof}
%
\begin{lem}\label{controlL}
For every complex number $z$ such that
$\text{\rm Re}(z)>0$ and every $p\ge 0$,  we have
$$\lim_{u\rightarrow 0^{+}} [ (z u)^{p+1}  {\mathcal L}(\tilde w_p)(z u)- \Gamma(p+1) (-\log |u|)^{1-\beta} ]= 0$$
  and for every $u_0>0$,
  $$ \sup_{0<u<u_0} \frac{u^{p+1}{\mathcal L}(\tilde w_p)(u)}{
    \Gamma(p+1) (- \log |u|)^{1-\beta} }<\infty.$$
Hence, if $\alpha=1$, for every $z$ such that $\textrm{Re}(z)>0$, we have
$$\sum_{n\ge 1}e^{-zu^\beta b_n^\beta}\left(u^\beta b_n^\beta\right)^p\sim_{u\rightarrow 0+}
     \Gamma(p+1)\frac{u^{-\beta}(-\log(|u|^\beta))^{1-\beta}}{z^{p+1}}. $$
\end{lem}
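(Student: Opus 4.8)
The plan is to reduce the statement to the behaviour of $\tilde w_0$ at infinity together with a complex Abelian argument for its Laplace transform. First I would determine the asymptotics of $\tilde w_0(t)$ as $t\to+\infty$. Feeding the classical expansions of the relevant inverse functions into the two defining formulas — namely $w(y)=\log y-\log\log y+o(1)$ (inverse of $y\mapsto ye^y$) and $\Delta(y)=\log y+\log\log y+o(1)$ (inverse of $y\mapsto e^y/y$) — with $\tau=t^{1/(\beta-1)}/(\beta-1)$ when $\beta>1$ and with the argument $(1-\beta)t^{1/(1-\beta)}$ when $\beta<1$, both expressions collapse to the same leading term. More precisely, I expect to obtain $\rho(t):=\tilde w_0(t)-(\log t)^{1-\beta}\to 0$ as $t\to+\infty$, with rate $O\big((\log t)^{-\beta}\log\log t\big)$. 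Establishing this \emph{difference} (and not merely the ratio) tending to $0$ is the crux, and it is precisely here that the second-order term $-\log\log y$ in the expansion of $w$ (resp. $+\log\log y$ for $\Delta$) is needed, since for $\beta<1$ the quantity $(\log t)^{1-\beta}$ itself diverges.

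Next, for real $s>0$, the change of variable $t=v/s$ in ${\mathcal L}(\tilde w_p)(s)=\int_0^{+\infty}e^{-st}t^p\tilde w_0(t)\,dt$ gives $s^{p+1}{\mathcal L}(\tilde w_p)(s)=\int_0^{+\infty}e^{-v}v^p\,\tilde w_0(v/s)\,dv$. Since $\int_0^{+\infty}e^{-v}v^p\,dv=\Gamma(p+1)$, this yields
$$s^{p+1}{\mathcal L}(\tilde w_p)(s)-\Gamma(p+1)(\log(1/s))^{1-\beta}=\int_0^{+\infty}e^{-v}v^p\big[\tilde w_0(v/s)-(\log(1/s))^{1-\beta}\big]\,dv.$$
I would split the bracket as $\rho(v/s)$ plus $\big[(\log(v/s))^{1-\beta}-(\log(1/s))^{1-\beta}\big]$. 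The first piece is controlled by Step 1 on $\{v\ge 2s\}$ (with a vanishing contribution from $v<2s$, where for $\beta<1$ one even has $\tilde w_0\equiv 0$ below the support threshold); the second tends to $0$ pointwise and is dominated by an integrable multiple of $|\log v|$, so dominated convergence applies. This gives the real-$s$ limit, with an $O\big((\log(1/s))^{-\beta}\log\log(1/s)\big)$ bound.

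For complex $s=zu$ with $\mathrm{Re}(z)>0$, writing $s=|s|e^{i\theta}$ with $|\theta|<\pi/2$, the real rescaling $t=\tau/|s|$ gives $s^{p+1}{\mathcal L}(\tilde w_p)(s)=e^{i(p+1)\theta}\int_0^{+\infty}e^{-e^{i\theta}\tau}\tau^p\,\tilde w_0(\tau/|s|)\,d\tau$. The rotated Gamma integral $\int_0^{+\infty}e^{-e^{i\theta}\tau}\tau^p\,d\tau=\Gamma(p+1)e^{-i(p+1)\theta}$ (valid for $|\theta|<\pi/2$) makes the phases cancel, so the constant term is the real quantity $\Gamma(p+1)(\log(1/|s|))^{1-\beta}$, while the error integral is bounded in modulus by the Step 2 integrand with $e^{-v}$ replaced by $e^{-\cos\theta\,v}$, and the same split-and-dominate argument (now with rate $\cos\theta>0$) shows it tends to $0$. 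Finally $(\log(1/|zu|))^{1-\beta}-(-\log|u|)^{1-\beta}\to 0$, since $\log(1/|zu|)=-\log|z|-\log|u|$ and $-\log|u|\to+\infty$; this is the first display. The uniform bound then follows, for $u_0<1$, from the ratio tending to $1$ as $u\to0^+$ together with the continuity and positivity of $u\mapsto u^{p+1}{\mathcal L}(\tilde w_p)(u)$ and of $u\mapsto(-\log u)^{1-\beta}$ on $(0,u_0)$.

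For the last assertion I would combine Lemma~\ref{kaa} (case $\alpha=1$), which gives $\sum_{n\ge1}e^{-zu^\beta b_n^\beta}(u^\beta b_n^\beta)^p=(u^\beta)^p{\mathcal L}(\tilde w_p)(zu^\beta)+O(1)$, with the asymptotics above applied at the argument $zu^\beta$. Dividing $(zu^\beta)^{p+1}{\mathcal L}(\tilde w_p)(zu^\beta)$ by $z^{p+1}u^\beta$ turns the leading term into exactly $\Gamma(p+1)z^{-(p+1)}u^{-\beta}(-\log(u^\beta))^{1-\beta}$, and the $O(1)$ term is negligible because the right-hand side is of order $u^{-\beta}$ up to a logarithmic factor. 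The one delicate point is that for $\beta>1$ the factor $(\log(1/s))^{1-\beta}$ tends to $0$, so the plain additive statement (difference tending to $0$) is too weak; here one must retain the rate $O\big((\log(1/s))^{-\beta}\log\log(1/s)\big)$ from Step 2 — equivalently the multiplicative form ${\mathcal L}(\tilde w_p)(s)\sim\Gamma(p+1)s^{-(p+1)}(\log(1/s))^{1-\beta}$ — which the same computation provides. The main obstacle throughout is Step 1, the difference asymptotic $\tilde w_0(t)-(\log t)^{1-\beta}\to0$, requiring a genuine second-order analysis of $w$ and $\Delta$; once its rate is in hand, everything downstream is dominated-convergence bookkeeping.
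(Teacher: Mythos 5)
Your proposal is correct, but it takes a genuinely different and more self-contained route than the paper. The paper's proof is three lines: it derives only the \emph{first-order} asymptotic $\tilde w_p(t)\sim_{t\to+\infty} t^p(\log t)^{1-\beta}$ from $w(x)\sim\log x$ and $\Delta(x)\sim\log x$, and then invokes the Tauberian theorems of Feller (Vol.~II, pp.~443--446) for Laplace transforms, asserting that they apply to complex arguments with positive real part. That citation yields the \emph{multiplicative} asymptotic $s^{p+1}{\mathcal L}(\tilde w_p)(s)\sim\Gamma(p+1)(\log(1/|s|))^{1-\beta}$, which is exactly what is needed downstream (the second display and the ``Hence'' part), at the cost of leaving two points implicit: Feller's theorems are stated for real Laplace variables, and a ratio statement does not by itself give the additive first display when $\beta<1$, where $(-\log|u|)^{1-\beta}$ diverges. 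Your proof fills precisely these two holes: the contour-rotation/phase-cancellation identity $\int_0^{+\infty}e^{-e^{i\theta}\tau}\tau^p\,d\tau=\Gamma(p+1)e^{-i(p+1)\theta}$ reduces the complex case to a real-type error integral, and your second-order expansions $w(y)=\log y-\log\log y+o(1)$, $\Delta(y)=\log y+\log\log y+o(1)$ give the difference asymptotic $\tilde w_0(t)-(\log t)^{1-\beta}=O\big((\log t)^{-\beta}\log\log t\big)$, from which the additive display follows by your split-and-dominate argument (the restriction to $v\ge 2s$ is indeed needed to avoid the artificial singularity of $(\log(v/s))^{1-\beta}$ at $v=s$). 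You also correctly flag, and repair, a subtlety in the lemma's own logic that the paper's route sidesteps automatically: for $\beta>1$ the additive display is too weak to imply the ``Hence'' conclusion, and one must keep the rate (equivalently the multiplicative form). In short, the paper buys brevity by outsourcing to classical Abelian/Tauberian theory; your argument is longer but proves strictly more (an error rate, the complex case, and the additive form for $\beta<1$) from scratch.
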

\begin{proof}
We know that $w(x)\sim_{+\infty}\log x$ and $\Delta(x)\sim_{+\infty} \log x$. 
Hence, for every $p\geq 0$, we have 
$$\tilde w_p(t)\sim_{t \rightarrow +\infty} t^p (\log t)^{1-\beta}.$$ 
Now, we apply Tauberian theorems
(Theorems p. 443--446 in \cite{Fel}) to  Laplace transforms ${\mathcal L} (.)$ defined for complex numbers such that 
$\text{\rm Re}(z)>0$.  The lemma follows.
\end{proof}

\begin{lem}
There exist a sequence of random variables $(a_n)_n$ and a random variable
$A$ defined on $(0,1)$ endowed with the Lebesgue measure $\lambda$ such that, for every
$n\ge 1$, $a_n$ and $ (b_nV_n^{-\frac 1\beta})^{\frac 1{\delta}}$ have the same 
distribution, 
such that ${\mathbb E}_\lambda[\sup_{n\ge 1} a_n]<+\infty$
and such that $(a_n)_n$ converges almost surely and in $L^1$ to the random variable $A$.
\end{lem}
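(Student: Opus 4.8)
The plan is to realise every variable on $((0,1),\lambda)$ by the quantile (generalised inverse) coupling, which turns the convergence in distribution coming from Lemma~\ref{Vn} into almost sure convergence and, via the uniform moment estimates of Lemma~\ref{Vn2}, produces an integrable dominating function. Throughout we are in the regime $\alpha>1$, where $|L|_\beta$ is defined. First I would record that, since $b_nV_n^{-1/\beta}=(b_n^{-1}V_n^{1/\beta})^{-1}$, Lemma~\ref{Vn} together with the continuous mapping theorem ($x\mapsto x^{-1/\delta}$ on $(0,\infty)$, using $|L|_\beta>0$ a.s.) gives that $(b_nV_n^{-1/\beta})^{1/\delta}$ converges in distribution to $A^\star:=|L|_\beta^{-1/\delta}$. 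Writing $F_n$ for the distribution function of $(b_nV_n^{-1/\beta})^{1/\delta}$, $F$ for that of $A^\star$, and $Q_n(u):=\inf\{x:F_n(x)\ge u\}$, $Q(u):=\inf\{x:F(x)\ge u\}$ for the corresponding generalised inverses, I would set $a_n:=Q_n$ and $A:=Q$ as functions on $((0,1),\lambda)$. By construction each $a_n$ has the law of $(b_nV_n^{-1/\beta})^{1/\delta}$, and since weak convergence $F_n\Rightarrow F$ forces $Q_n(u)\to Q(u)$ at every continuity point of the nondecreasing function $Q$ — whose discontinuity set is countable, hence $\lambda$-null — one gets $a_n\to A$ $\lambda$-a.e. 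This is the usual Skorokhod coupling on the unit interval.

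The hard part will be the domination $\mathbb{E}_\lambda[\sup_n a_n]<+\infty$. Here I would invoke Lemma~\ref{Vn2}: it furnishes an exponent $q>1$ and a constant $M<\infty$ with $\sup_n\mathbb{E}[a_n^{q}]\le M$. Indeed one may take $q=\delta\beta/(\beta-1)$ when $\beta>1$ and any $q>1$ when $\beta<1$; the requirement $q>1$ amounts to $\delta>1-1/\beta$, equivalently to $1+\alpha-\beta>0$, which holds throughout the range $\alpha>1$, $\beta\le2$. Markov's inequality then yields the tail bound $\mathbb{P}((b_nV_n^{-1/\beta})^{1/\delta}>t)\le Mt^{-q}$ uniformly in $n$, and reading it through the quantile construction — if $Q_n(u)>t$ then $1-u<\mathbb{P}(a_n>t)\le Mt^{-q}$ — gives the pointwise, $n$-independent bound
$$\sup_{n\ge1}a_n(u)=\sup_{n\ge1}Q_n(u)\le\Big(\frac{M}{1-u}\Big)^{1/q},\qquad u\in(0,1).$$
Since $q>1$, the function $u\mapsto(1-u)^{-1/q}$ is integrable on $(0,1)$, so $\mathbb{E}_\lambda[\sup_n a_n]<+\infty$.

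Finally, $L^1$-convergence comes for free: the $a_n$ are nonnegative, dominated by the integrable function $\sup_m a_m$, and converge $\lambda$-a.e.\ to $A$, so dominated convergence gives $a_n\to A$ in $L^1$ (and in passing $\mathbb{E}_\lambda[A]=\lim_n\mathbb{E}[(b_nV_n^{-1/\beta})^{1/\delta}]=\mathbb{E}[|L|_\beta^{-1/\delta}]<\infty$). The single delicate point is the verification that the moment exponent supplied by Lemma~\ref{Vn2} is strictly larger than $1$, since this is exactly what makes the dominating function integrable at $u=1$; the remainder is the standard quantile coupling combined with the dominated convergence theorem.
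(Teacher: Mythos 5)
Your construction is essentially the paper's own: the quantile (Skorohod) coupling on $((0,1),\lambda)$, a uniform tail bound obtained from Markov's inequality and the moment estimates of Lemma~\ref{Vn2}, and dominated convergence for the $L^1$ statement; your pointwise bound $\sup_n a_n(u)\le (M/(1-u))^{1/q}$ is just the paper's estimate of $\lambda(\sup_n a_n>t)$ read through the quantile function. The problem is your opening restriction ``throughout we are in the regime $\alpha>1$''. The lemma carries no such restriction, and the paper needs it for $\alpha=1$ as well: it feeds into the representation (\ref{EQ1}) and into Lemmas \ref{UI} and \ref{cvps}, which explicitly treat $\alpha=1$, $\beta>1$. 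In that regime $|L|_\beta$ does not exist, but Lemma~\ref{Vn} gives almost sure (hence in distribution) convergence of $b_n^{-1}V_n^{1/\beta}$ to the constant $c^{1/\beta}$, so the coupling still makes sense and the limit is $A=c^{-1}$.

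For $\alpha=1$ and $\beta<2$ your argument adapts with no extra work: since $\delta\beta=1$, the exponent $q=\delta\beta/(\beta-1)=1/(\beta-1)$ is still strictly larger than $1$ (and any $p$ works when $\beta\le 1$). But at $\alpha=1$, $\beta=2$ it genuinely fails, exactly at what you call the ``single delicate point'': there $\delta=1/2$, $a_n$ has the law of $b_n^2/V_n=n\log n/V_n$, and Lemma~\ref{Vn2} (exponent $\beta/(\beta-1)=2$ applied to $b_n/V_n^{1/2}$) only controls $\mathbb{E}[a_n]$, that is, it supplies $q=1$, for which $u\mapsto(1-u)^{-1/q}$ is not integrable on $(0,1)$. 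The paper closes this case by a separate estimate: since $V_n=\sum_y N_n(y)^2\ge n^2/R_n$ by Cauchy--Schwarz, where $R_n$ is the range of the walk, one gets $\mathbb{E}\left[(n\log n/V_n)^2\right]\le \frac{(\log n)^2}{n^2}\,\mathbb{E}[R_n^2]$, which is uniformly bounded in $n$ by inequality (3b) of Le Gall and Rosen \cite{LGR}; this provides the exponent $2>1$ and restores the integrable domination. Your proof needs this supplementary case (together with the easy $\alpha=1$, $\beta<2$ adaptation) before the lemma is established as stated.
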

\begin{proof}
Lemmas   \ref{Vn} and \ref{Vn2}  insure in particular the uniform integrability of 
$\left( (b_n V_n^{-\frac 1{\beta}})^{\frac 1\delta}
\right)_n$ and the existence of
a sequence of random variables $(a_n)_n$ and of a random variable
$A$ defined on $(0,1)$ endowed with the Lebesgue measure $\lambda$ such that, for every
$n\ge 1$, $a_n$ and $ (b_n V_n^{-\frac 1\beta})^{\frac 1{\delta}}$ have the same 
distribution, such that $A$ has the same distribution
as  $|L|_\beta^{-1/\delta}$ if $\alpha>1$ and is equal to $c^{-1}$ if $\alpha=1$,
such that ${\mathbb  E}_\lambda[\sup_{n\ge 1}a_n]<+\infty$
and $(a_n)_n$ converges almost surely to $A$ and so in $L^1$. Indeed, following the
the Skorohod representation theorem, we define
$$a_n(x):=\inf\left\{u>0\ :\ {\mathbb P}\left( (b_nV_n^{-\frac 1{\beta}})^{\frac 1\delta}
  \le u\right)\ge x\right\}$$
and $A$ as follows~:
$$A(x):= c^{-1}\ \ \mbox{if}\ \alpha=1$$
and
$$A(x):= \inf\left\{u>0\ :\ {\mathbb P}\left( |L|_\beta^{-1/\delta}  \le u\right)\ge x\right\}
\ \ \mbox{if}\ \alpha>1.$$
The sequence $(a_n)_n$ converges almost surely to the random variable $A$ as $n$ goes to infinity. 
Moreover, from the formula
${\mathbb E}_\lambda[\sup_{n\ge 1}a_n]=\int_0^{+\infty}\lambda
(\sup_na_n>t) \, dt$
and the fact that
$$\sup_n a_n(x)>t\ \Leftrightarrow\ \inf_n {\mathbb P}\left( 
(b_n V_n^{-\frac 1\beta})^{\frac 1{\delta}}
\le t\right) <x,
$$
we get
$$\lambda(\sup_na_n>t) = \sup_n  {\mathbb P}\left( 
(b_nV_n^{-\frac 1 {\beta}})^{\frac 1\delta}
> t\right)\le t^{-\gamma}\sup_n
   {\mathbb E}\left[ b_n^{\frac\gamma\delta}V_n^{-\frac{\gamma}{\delta\beta}}
\right] ,$$
with $\gamma:=2$ when $\beta\le 1$ ;
$\gamma:=\delta\beta/(\beta-1)$ when $\alpha>1$ and $\beta >1$ or when $\alpha=1$ 
and $\beta\in(1,2)$. In each of this case, this gives 
${\mathbb E}_\lambda[\sup_{n\ge 1}a_n]<+\infty$ from Lemma \ref{Vn2}.
If $\alpha=1$ and $\beta=2$, we take $\gamma=2$ and use the fact that
$$ \sup_n{\mathbb E}\left[ b_n^{\frac\gamma\delta}V_n^{-\frac{\gamma}{\delta\beta}}
\right] = \sup_n {\mathbb E}\left[ \left(\frac{n\log n}{V_n}\right)^2
\right]\le \sup_n\frac {(\log n)^2}{n^2}{\mathbb E}[R_n^2]<\infty,$$
with $R_n:=\#\{y\in{\mathbb Z}\ :\ N_n(y)>0\}$ (according to 
inequality (3b) in \cite{LGR}) and we conclude analogously.
\end{proof}

Therefore,  using the previous lemma, the series $\psi$ can be rewritten, for
every real number $t\neq 0$, as 
\begin{equation}\label{EQ1}
\psi(t) ={\mathbb E}_\lambda\left[\sum_{n\ge 1}e^{-{|t|}^{\beta} 
    b_n^{\beta}a_ n^{-\delta\beta}(A_1+iA_2 \mbox{\small sgn}(t))}\right].
\end{equation}
\begin{lem}\label{UI}
There exists $t_0>0$ such that when $\alpha>1$ or $(\alpha=1,  \beta> 1)$, 
the family of random variables
$$ \left(\frac 1{\gamma(t)}\sum_{n\ge 1}e^{-{|t|}^{\beta} 
    b_n^{\beta}a_ n^{-\delta\beta}(A_1+iA_2 \mbox{\small sgn}(t))}\right)_{0<|t|<t_0}$$
is uniformly integrable and such that, if $\alpha>1$, the family
$$\left(\frac {|t|^\beta}{\gamma(t)}
\sum_{n\ge 1}  b_n^{\beta}a_ n^{-\delta\beta} e^{-|t|^\beta (a_n^{-1} n)^{\delta\beta}
(A_1+iA_2 \text{sgn} (t) )} \right)_{0<|t|<t_0} $$
is also uniformly integrable.
\end{lem}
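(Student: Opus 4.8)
Because a family of random variables that is bounded in modulus by a single $\lambda$-integrable random variable is automatically uniformly integrable, my plan is to produce, in each case, a fixed function in $L^1(\lambda)$ dominating the corresponding family for all $t$ with $0<\va{t}<t_0$. The natural candidate is $M:=\sup_{n\ge 1}a_n$, which belongs to $L^1(\lambda)$ by the preceding lemma. Two reductions are used systematically: the identity $\va{e^{-w(A_1+iA_2\text{sgn}(t))}}=e^{-A_1w}$ for $w\ge 0$, and the inequality $a_n^{-\delta\beta}\ge M^{-\delta\beta}$ coming from $a_n\le M$, which together give
$$\va{\sum_{n\ge 1}e^{-\va{t}^\beta b_n^\beta a_n^{-\delta\beta}(A_1+iA_2\text{sgn}(t))}}\le\sum_{n\ge 1}e^{-A_1\va{t}^\beta b_n^\beta M^{-\delta\beta}}.$$
The right-hand side is exactly of the form estimated in Lemma \ref{kaa}, with the random variable $M$ now playing the role of a parameter.

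\emph{The case $\alpha>1$.} For the first family I would set $u:=\va{t}^{1/\delta}M^{-1}$, so that $u^{\delta\beta}b_n^\beta=\va{t}^\beta b_n^\beta M^{-\delta\beta}$, and apply Lemma \ref{kaa} with $p=0$, $z=A_1$; this bounds the right-hand sum above by $\frac{1}{u\delta\beta}\Gamma(\frac1{\delta\beta})A_1^{-1/(\delta\beta)}+K_{0,\alpha}(A_1)$, that is by $\mathrm{const}\cdot M\va{t}^{-1/\delta}+K_{0,\alpha}(A_1)$. Dividing by $\va{\gamma(t)}=C\va{t}^{-1/\delta}(A_1^2+A_2^2)^{-1/(2\delta\beta)}$ converts the first term into a constant multiple of $M$ and the second into a multiple of $\va{t}^{1/\delta}$, whence a bound $\le\mathrm{const}\,(M+1)$ on $0<\va{t}<t_0$. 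For the second family, since $\alpha>1$ gives $(a_n^{-1}n)^{\delta\beta}=b_n^\beta a_n^{-\delta\beta}$, absorbing the prefactor $\va{t}^\beta$ turns the generic summand into $w_ne^{-w_n(A_1+iA_2\text{sgn}(t))}$ with $w_n:=\va{t}^\beta b_n^\beta a_n^{-\delta\beta}$; the elementary bound $xe^{-A_1x}\le\frac{2}{A_1e}e^{-A_1x/2}$ then reduces $\sum_n w_ne^{-A_1w_n}$ to the previous sum with $A_1$ replaced by $A_1/2$, and the same computation gives domination by $\mathrm{const}\,(M+1)$.

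\emph{The case $\alpha=1$, $\beta>1$} (first family only). Here $\delta\beta=1$ and $b_n^\beta=n(\log n)^{\beta-1}$; I would set $u:=\va{t}^\beta M^{-1}$ and fix some $u_0\in(0,1)$. The $\alpha=1$ part of Lemma \ref{kaa} (with $p=0$) bounds the sum by $\mathcal L(\tilde w_0)(A_1u)+K_{0,1}(A_1)$. When $A_1u\ge u_0$, the quantity $\mathcal L(\tilde w_0)(A_1u)\le\mathcal L(\tilde w_0)(u_0)$ is bounded (as $\tilde w_0\ge 0$ makes $\mathcal L(\tilde w_0)$ nonincreasing), and division by $\va{\gamma(t)}$ leaves a purely deterministic quantity of order $\va{t}^\beta(-\log\va{t}^\beta)^{\beta-1}$, bounded for $\va{t}<t_0$. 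When $A_1u<u_0$, Lemma \ref{controlL} gives $\mathcal L(\tilde w_0)(A_1u)\le\mathrm{const}\,(-\log(A_1u))^{1-\beta}/(A_1u)$, and after dividing by $\va{\gamma(t)}$ the bound takes the shape $\mathrm{const}\cdot M\,\pare{\frac{-\log\va{t}^\beta}{-\log(A_1u)}}^{\beta-1}$, up to a bounded remainder.

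\emph{The main obstacle} is to dominate this last expression uniformly in both $t$ and the random variable $M$. With $L:=-\log\va{t}^\beta$ and $D:=-\log(A_1u)=L+\log M-\log A_1$, it equals $M\,(L/D)^{\beta-1}$, and in the regime $A_1u<u_0$ one has $D>-\log u_0>0$. I would split on the size of $M$: if $M\ge A_1$ then $D\ge L$ and the factor is $\le M$; if $M<A_1$ then $r:=D/L\in(0,1)$, $M=A_1e^{L(r-1)}$, and the factor equals $A_1e^{L(r-1)}r^{1-\beta}$, a function of $r$ whose maximum over the relevant subinterval of $(0,1)$ is attained at an endpoint and is $\le A_1$ once $\va{t}$ is small. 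Collecting the two regimes yields domination by $\mathrm{const}\,(M+1)\in L^1(\lambda)$, and uniform integrability follows. The algebra for $\alpha>1$ is routine once Lemma \ref{kaa} is in hand; the genuine difficulty is precisely this joint control of the slowly varying logarithmic factor and of $M$ in the case $\alpha=1$.
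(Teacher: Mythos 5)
Your proof is correct, and its skeleton coincides with the paper's: replace $a_n$ by $M=\sup_n a_n$ inside the exponential (only the real part of the exponent matters, and $a_n\le M$), apply Lemma \ref{kaa} with a random value of $u$ built from $M$ (legitimate because $K_{p,\alpha}(z)$ and $K_{p,1}(z)$ are suprema over all $u>0$), and dominate each family by $\mathrm{const}\,(1+M)$, which is $\lambda$-integrable by the lemma preceding Lemma \ref{UI}; domination by a fixed integrable variable indeed yields uniform integrability. The divergences are in two localized steps. (i) For the second family ($\alpha>1$), the paper passes through the $p=1$ case of Lemma \ref{kaa} via the bound (\ref{eq2}); you instead absorb the polynomial factor with $xe^{-A_1x}\le\frac{2}{A_1e}\,e^{-A_1x/2}$ and reuse the $p=0$ case. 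This is not purely cosmetic: since $x\mapsto xe^{-cx}$ is not monotone, the substitution $a_n\mapsto M$ cannot be performed directly in front of the exponential, and your absorption trick is precisely what makes that step rigorous, a point the paper leaves implicit. (ii) In the case $\alpha=1$, $\beta>1$, the paper disposes of the logarithmic ratio in one stroke through the almost sure inequality $c(1+\sup_n a_n)\ge 1$, which rests on the identification of the a.s. limit $a_n\to A=c^{-1}$: the ratio of logarithms is then $\ge 1$ and, raised to the power $1-\beta<0$, is $\le 1$, giving the bound $c_1(2+\sup_n a_n)$ at once. You never use the value of this limit: you split on $M\ge A_1$ versus $M<A_1$ and, in the second case, maximize $r\mapsto A_1e^{L(r-1)}r^{1-\beta}$ over the admissible interval $[(-\log u_0)/L,1)$, where unimodality (decreasing, then increasing) places the maximum at an endpoint, of value $\le A_1$ once $L=-\log|t|^\beta$ is large; your separate treatment of the regime $A_1u\ge u_0$ plays the role of the paper's ``$+1$'' in $1+\sup_n a_n$, which keeps the Laplace argument small. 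Both arguments are sound: the paper's is shorter, while yours requires only $M\in L^1(\lambda)$ and no knowledge of the limit of $(a_n)_n$, at the price of an elementary optimization.
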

\begin{proof}
If $\alpha>1$, thanks to lemma \ref{kaa}, we know that, 
for every real number $t\in(0,1)$ and every complex number $z$ such that $\text{\rm Re}(z)>0$,
we have
%
\begin{equation}\label{eq1}
\left|  \sum_{n\ge 1}e^{-|t|^\beta (a_n^{-1} n)^{\delta\beta}z}\right|
  \le  |t|^{-\frac 1\delta} 
    \frac{\sup_n a_n}{\delta\beta}\Gamma\left(\frac 
    1{\delta\beta}\right)|z|^{-\frac 1{\delta\beta}}+K_{0,\alpha}(z),
\end{equation}
and,
\begin{equation}\label{eq2}
|t|^\beta  \left|  \sum_{n\ge 1} (a_n^{-1} n)^{\delta\beta}   
   e^{-|t|^{\beta}(a_n^{-1} n)^{\delta\beta}z } \right|
  \le  |t|^{-\frac 1\delta} 
    \frac{\sup_n a_n}{\delta\beta} \Gamma\left(1+\frac1{\delta\beta}\right) 
   |z|^{-(1+\frac 1{\delta\beta})}+K_{1,\alpha}(z),
\end{equation}
from which we conclude.

Now, let us consider the case $\alpha=1$ and $\beta > 1$.
According to lemmas \ref{kaa} and \ref{controlL}, since
$0<A_1{|t|}^{\beta}(1+\sup_n a_n)^{-1}\le A_1$, we have
\begin{eqnarray*}
 \left|\sum_{n\ge 1} e^{-{|t|}^{\beta}  b_n^{\beta}a_n^{-1}(A_1+iA_2 \mbox{\small sgn}(t))}\right|
& \le  &K_{0,1}(A_1)+ {\mathcal L}(\tilde w_0)\left( A_1{|t|}^{\beta}(1+\sup_n a_n)^{-1} \right)\\
& \le &K_{0,1}(A_1) + c_0  |t|^{-\beta}\  (1+\sup_n a_n)  \left( - \log  \Big( A_1{|t|}^{\beta}(1+\sup_n a_n)^{-1} \Big)\right)^{1-\beta}
\end{eqnarray*}
for some positive constant $c_0>0$. Hence, there exists $t_1\in(0,1)$ 
such that for every $0<|t|<t_1$, 
$$
\frac{1}{|\gamma(t)|} \left|\sum_{n\ge 1} e^{-{|t|}^{\beta}     b_n^{\beta}a_n^{-1}(A_1+iA_2 \mbox{\small sgn}(t))}\right|
 \le c_1 \left[1+ (1+\sup_n a_n)  \left( \frac{- \log  \Big( A_1{|t|}^{\beta}(1+\sup_n a_n)^{-1} \Big)}{- \log  \Big( A_1{|t|}^{\beta}c\Big)}\right)^{1-\beta}\right]$$
for some positive constant $c_1>0$.\\*
\noindent If $\beta> 1$, since $c(1+\sup_n(a_n) )\ge 1$ a.s., the right-hand side of the above inequality is almost surely less than
$c_1 (2+\sup_n a_n)$ for every $|t|<(cA_1)^{-\frac 1\beta}$. Then, we can choose $t_0$ as the infimum of $(cA_1)^{-\frac 1\beta} $ and $t_1$.
The uniform integrability then follows from Lemma 9. 
\end{proof}
\begin{lem}\label{cvps}
If ($\alpha> 1,\beta\ne 1$) or $(\alpha=1,\beta >1)$,  we have
$$\lim_{t\rightarrow 0}\frac 1{\gamma(t)}
  \sum_{n\ge 1} \left(e^{-|t|^\beta(a_n^{-\delta\beta}b_n^\beta) 
  (A_1+iA_2 \text{sgn} (t))}-
   e^{-|t|^\beta(A^{-\delta\beta} b_n^{\beta})
   ( A_1+i A_2 \text{sgn} (t))  }\right)=0\, \, a.s..$$
Moreover, when $\alpha >1$ and $\beta <1$ , we have 
   $$\lim_{t\rightarrow 0} \frac{|t|^{\beta} } {\gamma(t)}
  \sum_{n\ge 1} \left( (a_n^{-\delta\beta}b_n^\beta)\ e^{-|t|^\beta(a_n^{-\delta\beta}b_n^\beta) 
  (A_1+iA_2 \text{sgn} (t))}- (A^{-\delta\beta} b_n^\beta)    e^{-|t|^\beta(A^{-\delta\beta} b_n^{\beta})
   ( A_1+i A_2 \text{sgn} (t))  }\right)=0\, \, a.s..$$
\end{lem}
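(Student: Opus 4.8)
The plan is to fix, once and for all, a realization in the full-measure event on which $a_n\to A$, $a^{*}:=\sup_n a_n<\infty$ (both supplied by the Skorohod representation lemma above) and $A>0$ (which holds because $A$ is distributed as $|L|_\beta^{-1/\delta}$ when $\alpha>1$ and equals $c^{-1}$ when $\alpha=1$). On this event the numbers $\theta_n:=a_n^{-\delta\beta}$ satisfy $0<\theta_{\min}\le\theta_n\le\theta_{\max}<\infty$ for constants depending only on the realization, and $\theta_n\to\Theta:=A^{-\delta\beta}>0$. Writing $z:=A_1+iA_2\,\mathrm{sgn}(t)$ (so $\mathrm{Re}(z)=A_1>0$), each series to be controlled has the form $\sum_{n\ge 1}\big(F(\theta_n)-F(\Theta)\big)$, with $F(s)=e^{-|t|^\beta s b_n^\beta z}$ in the first statement and $F(s)=s\,b_n^\beta e^{-|t|^\beta s b_n^\beta z}$ in the second (up to the global factor $|t|^\beta$). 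The engine of the proof is the elementary identity $F(\theta_n)-F(\Theta)=(\theta_n-\Theta)\int_0^1 F'(\Theta+s(\theta_n-\Theta))\,ds$, which isolates the small factor $\theta_n-\Theta$.

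Given $\varepsilon>0$, I would choose $N$ depending on the realization and on $\varepsilon$ so that $|\theta_n-\Theta|<\varepsilon$ for all $n>N$, and split each series at $N$. The finite block $\sum_{n\le N}$ is harmless: for the first statement each of its finitely many terms tends to $0$ as $t\to 0$ (each exponential tends to $1$), whereas for the second statement the block tends to a finite constant; in both cases, since $1/\gamma(t)\to 0$ and $|t|^\beta/\gamma(t)\to 0$ as $t\to 0$, dividing by $\gamma(t)$ renders this contribution negligible. The tail $\sum_{n>N}$ is where the work lies. Using the identity above, the intermediate point $\eta_n:=\Theta+s(\theta_n-\Theta)$ stays in $[\eta_{\min},\theta_{\max}]$ with $\eta_{\min}:=\min(\Theta,\theta_{\min})>0$, so that $|e^{-|t|^\beta\eta_n b_n^\beta z}|\le e^{-|t|^\beta\eta_{\min}b_n^\beta A_1}$ uniformly in $n$ and $s$. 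Bounding $|\theta_n-\Theta|<\varepsilon$ then reduces the tail, in modulus, to $\varepsilon$ times sums of the shape $|t|^\beta\sum_n b_n^\beta e^{-|t|^\beta\eta_{\min}b_n^\beta A_1}$ (first statement) and, in addition, $\sum_n (|t|^\beta b_n^\beta)^2 e^{-|t|^\beta\eta_{\min}b_n^\beta A_1}$ (second statement).

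These remaining sums are exactly of the type handled by Lemma~\ref{kaa} (with $p=1$ and $p=2$, and the real argument $A_1$, after the substitution $u^{\delta\beta}=|t|^\beta\eta_{\min}$ when $\alpha>1$), together with Lemma~\ref{controlL} in the case $\alpha=1$; they are therefore of order $|t|^{-1/\delta}$ when $\alpha>1$ and of order $|t|^{-\beta}(-\log|t|^\beta)^{1-\beta}$ when $\alpha=1$. In either regime this matches the order of $|\gamma(t)|$ (the extra global $|t|^\beta$ in the second statement being absorbed by $|t|^\beta/\gamma(t)$), so that after dividing by $\gamma(t)$ the tail is bounded by $O(\varepsilon)$ uniformly for $t$ near $0$. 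Letting $t\to 0$ and then $\varepsilon\to 0$ yields the two claimed almost sure limits.

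The \emph{main obstacle} is precisely this tail estimate. One must verify that the intermediate exponents $\eta_n$ stay bounded away from $0$, so that the Laplace asymptotics of Lemmas~\ref{kaa} and~\ref{controlL} may be applied with a single fixed decay rate $A_1$ (the constants produced being allowed to depend on the fixed realization, which is harmless for an almost sure statement), and that the powers of $|t|$ together with the logarithmic corrections generated by those lemmas match those of $\gamma(t)$ exactly, so that the gain of a factor $\varepsilon$ is not spoiled by a diverging prefactor.
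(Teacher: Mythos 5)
Your proof is correct and follows essentially the same route as the paper's: both isolate the small factor (your $|\theta_n-\Theta|<\varepsilon$, the paper's $|a_n^{-1}-A^{-1}|$) via Taylor's inequality/the mean value theorem applied to the exponential, invoke the almost sure convergence of $(a_n)_n$ to $A>0$, and bound the remaining Laplace-type sums by the uniform estimates of Lemmas~\ref{kaa} and~\ref{controlL}, which yield the $O(|\gamma(t)|)$ control that makes the $\varepsilon$-gain effective. The only (cosmetic) difference is the splitting device: you cut at a fixed $N(\varepsilon)$ and take $\limsup_{t\to 0}$ followed by $\varepsilon\to 0$, whereas the paper cuts at the $t$-dependent index $\lfloor|t|^{\varepsilon-1/\delta}\rfloor$, the initial block being crudely $O(|t|^{\varepsilon-1/\delta})=o(\gamma(t))$ and the smallness in the tail coming directly from $\sup_{n>\lfloor|t|^{\varepsilon-1/\delta}\rfloor}\left|a_n^{-1}-A^{-1}\right|\to 0$ as $t\to 0$.
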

\begin{proof}
We only prove the first assertion, the proof of the second one following the same scheme. Let $\beta\neq 1$ and $\alpha\geq 1$. 
Let $\varepsilon \in (0, 1/\delta)$,
$$\sum_{n=1}^{\floor{|t|^{\varepsilon-1/\delta}} } 
  \left|\left(e^{-|t|^\beta a_n^{-\delta\beta}b_n^\beta(A_1+ i A_2 \text{sgn} (t))} -
 e^{-|t|^\beta A^{-\delta\beta} b_n^\beta  (A_1+ i A_2\text{sgn} (t))}\right)\right|
  =\mathcal{O}(|t|^{\varepsilon-1/\delta}) ={o}(\gamma(t)).$$ 
Now it remains to prove 
the almost sure convergence to 0 as $t$ goes to $0$ of the following quantity~:
 $$\varepsilon_t:= \frac 1{\gamma(t)}    \sum_{ n > [|t|^{\varepsilon-1/\delta}] } 
   \left(e^{-|t|^\beta a_n^{-\delta\beta} b_n^{\beta}(A_1+iA_2\text{sgn} (t))}-
 e^{-|t|^\beta A^{-\delta\beta} b_n^{\beta}(A_1+iA_2\text{sgn} (t) )}\right).$$ 
By applying Taylor's inequality to the function 
$v\mapsto e^{-|t|^\beta|v|^{\delta\beta} b_n^\beta(A_1+iA_2\text{sgn}(t))}$,  
we have
 \begin{eqnarray*}
|\varepsilon_t|
&\leq &\delta\beta (A_1+|A_2|) \frac{ |t|^\beta}{\gamma(t)}  
\sum_{ n > [|t|^{\varepsilon-1/\delta}] }   (\inf_{n>[|t|^{\varepsilon-1/\delta}]}
      a_n)^{-\delta\beta} b_n ^\beta  e^{-A_1 |t|^\beta (\sup_{n>[|t|^{\varepsilon-1/\delta}]}
  a_n)^{-\delta\beta} b_n^\beta } \times\\
&\, &\ \ \ \ \ \ \ \ \ \ \ \ \ \ \ \ \ \ 
\ \ \ \ \ \ \ \ \ \ \ \ \ \ \ \ \ \ 
  \times \left|\frac{a_n^{-1}-A^{-1}} {(\sup_{n>[|t|^{\varepsilon-1/\delta}]} a_n)^{-1}}\right|\\ 
 &=&o(1) \ \ \mbox{\rm a.s.},
\end{eqnarray*} 
using lemmas \ref{kaa} and \ref{controlL} and
according to the fact that $(a_n)_n$ converges almost surely to $A$.
\end{proof}
\begin{proof}[Proof of Proposition \ref{series-2}]
First consider the case $\alpha>1$ and $\beta\neq1$. Thanks to 
lemmas \ref{kaa} and \ref{cvps}, we get that  
$$\frac 1{\gamma(t)}\left[\sum_{n\ge 1}  e^{-|t|^\beta a_n^{-\delta\beta}b_n^\beta(A_1+iA_2 
\text{sgn} (t))} -  \frac{A}{\delta\beta} \Gamma(\frac{1}{\delta\beta}) (A_1+iA_2 \text{sgn} (t))^{-1/\delta\beta} |t|^{-1/\delta} \right]   
\rightarrow 0\ \mbox{\rm a.s.}$$
 as $t$ goes to $0$.
Therefore, thanks to (\ref{EQ1}) and to the uniform integrability (Lemma \ref{UI}), we  
deduce $(\ref{lim-psi})$. The proof of $(\ref{lim-psi2})$ is similar (using Lemma \ref{controlL}) 
and is omitted. \\*
Again, to prove (\ref{lim-psi-deri}), we use (\ref{EQ1}). Since for $t\neq 0$,
$$\psi'(t)=- \beta\text{sgn}(t)  (A_1 +i A_2\text{sgn}(t)   ) |t|^{\beta-1}  \sum_{n\ge 1} 
 {\mathbb E}\left[ a_n^{-\delta\beta} b_n^\beta e^{-|t|^\beta 
    (a_n^{-\delta\beta}b_n^\beta (A_1+i A_2\text{sgn}(t) )} \right],$$ 
and
$$\gamma'(t)=- \frac{C}{\delta}  (A_1 +i A_2\text{sgn}(t))^{-1/\delta\beta} |t|^{-1/\delta-1},$$
  we decompose 
  $$\left(\frac{C}{\delta\beta}  (A_1 +i A_2\text{sgn}(t))^{-(1+1/\delta\beta)}\right)\left[\frac {\psi'(t)}{\gamma'(t)}-1\right]$$ as the sum of 
$$|t|^{1/\delta}  
 {\mathbb E}\left[ \sum_{n\ge 1} \left(e^{-|t|^\beta(a_n^{-\delta\beta}
      b_n^\beta (A_1+iA_2 \text{sgn} (t))}(|t|^{1/\delta} a_n^{-1})^{\delta\beta}b_n^{\beta}
-  e^{-|t|^\beta A^{-\delta\beta}b_n^{\beta} (A_1+iA_2 \text{sgn} (t))}  
   (|t|^{1/\delta} A^{-1})^{\delta\beta} b_n^{\beta}\right) \right]$$
   and
$$|t|^{1/\delta}{\mathbb E}\left[ \sum_{n\ge 1}  e^{-|t|^\beta A^{-\delta\beta}b_n^\beta (A_1+iA_2 \text{sgn} (t))}  
   (|t|^{1/\delta} A^{-1})^{\delta\beta} b_n^{\beta} - \frac{A}{|t|^{1/\delta}\delta\beta} \Gamma(1+\frac{1}{\delta\beta})(A_1 +i A_2\text{sgn}(t))^{-(1+1/\delta\beta)} \right]$$
The second assertion in Lemma \ref{cvps} and the uniform integrability in Lemma \ref{UI} implies that the first sum goes to 0 as $t$ goes to 0.
From Lemma \ref{kaa}, we get that the second one goes to 0 as  $t$ goes to 0. 
\end{proof}
%
%
%
%
%
%
%
\section{Proof of Theorem \ref{threc2} }
\noindent We first begin to prove that for every $a\in \mathbb{R}$,  the sequence of
$$K_{n,a}(h)= \sum_{k=1}^n\{  \mathbb{E} [h(Z_k)] - \mathbb{E} [h(Z_k-a)]\} $$ 
converges as $n$ tends to infinity.
Indeed, for every $a\in\mathbb{R}$, we have
\begin{equation}\label{int-1} 
K_{n,a}(h)= \frac{1}{2\pi} \int_{\mathcal I} \hat{h} (t) \left(\sum_{k=1}^n \mathbb{E} [e^{i t Z_k} ] \right) \left( 1- e^{-ita} \right) dt.
\end{equation}

\begin{prop}\label{tildepsi}
\begin{itemize}
\item[i)-] The series 
$$ \sum_{n\ge 1} | {\mathbb E}[e^{itZ_n}] | $$
is bounded on any set $[r,+\infty[$ with $r>0$ and so the series 
$$\tilde{\psi}(t):= \sum_{n\ge 1} {\mathbb E}[e^{itZ_n}] $$
is well defined for every  $t\neq 0$.
\item[ii)-] We have
\begin{equation*}
\lim_{t\rightarrow 0} \frac 1{\gamma(t)} \sum_{n\ge 1} \left| {\mathbb E}[e^{itZ_n}] -  {\mathbb E}\left[e^{-|t|^\beta V_n(A_1+iA_2 sgn(t))}\right]\right|=0,
\end{equation*}
and so
\begin{equation*}
\lim_{t\rightarrow 0} \frac 1{\gamma(t)} [\tilde{\psi}(t)-\psi(t)]=0,
\end{equation*}\end{itemize}
\end{prop}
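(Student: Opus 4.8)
The plan is to condition on the random walk $S$. Since, given $(S_k)_{k\le n}$, the scenery values $(\xi_y)_y$ are i.i.d.\ with characteristic function $\varphi_\xi$, we have $\mathbb E[e^{itZ_n}\mid S]=\prod_{y\in\mathbb Z}\varphi_\xi(tN_n(y))$, whereas replacing $\varphi_\xi$ by the exact stable factor $\phi$ of (\ref{FC}) gives $\prod_{y}\phi(tN_n(y))=e^{-|t|^\beta V_n(A_1+iA_2\,\text{sgn}(t))}$ by (\ref{re-ecriture-Zn}) and the definition of $V_n$. Thus $\tilde\psi$ and $\psi$ are both sums over $n$ of expectations of such products, and everything reduces to comparing $\varphi_\xi$ with $\phi$ at the arguments $tN_n(y)$.

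For i) I would first note that, since $|\varphi_\xi|\le 1$ with $|\varphi_\xi(u)|<1$ for $u\neq 0$, continuity together with the strongly non-lattice assumption yields $\eta_r:=\sup_{|u|\ge r}|\varphi_\xi(u)|<1$; hence for $|t|\ge r$ every factor with $N_n(y)\ge 1$ is at most $\eta_r$, so $|\mathbb E[e^{itZ_n}\mid S]|\le \eta_r^{R_n}$ with $R_n:=\#\{y:N_n(y)>0\}$. As $R_n\to\infty$, with $R_n\gtrsim n^{1/\alpha}$ (resp.\ $n/\log n$ when $\alpha=1$) with overwhelming probability, $\sum_n\mathbb E[\eta_r^{R_n}]<\infty$, uniformly in $t\in[r,+\infty)$. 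In the lattice case $\varphi_\xi$ is $2\pi$-periodic, so instead I would use that for $t\in[r,\pi]$ the set of integers $j$ for which $\text{dist}(tj,2\pi\mathbb Z)$ is bounded below has positive lower density, uniformly in $t$; bounding the product by $\eta^{\#\{y:N_n(y)\in G_t\}}$ for the corresponding good set $G_t$ and using that this count grows linearly in $R_n$ gives the same conclusion. This yields boundedness on $[r,+\infty)$ and hence the existence of $\tilde\psi(t)$ for $t\neq 0$.

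For ii) the starting point is the analytic consequence of the normal domain of attraction hypothesis, namely $\varphi_\xi(u)=\phi(u)+o(|u|^\beta)$ as $u\to 0$, so that for every $\varepsilon>0$ there is $u_0>0$ with $|\varphi_\xi(u)-\phi(u)|\le\varepsilon|u|^\beta$ for $|u|\le u_0$. Using $|\prod_y a_y-\prod_y b_y|\le\sum_y|a_y-b_y|$ (valid since all factors have modulus $\le 1$), I obtain $|\mathbb E[e^{itZ_n}]-\mathbb E[e^{-|t|^\beta V_n(A_1+iA_2\,\text{sgn}(t))}]|\le \mathbb E[\sum_y|\varphi_\xi(tN_n(y))-\phi(tN_n(y))|]$. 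I would then split the sum over $n$ at $N=C|t|^{-1/\delta}$ (resp.\ the analogue for $\alpha=1$), chosen so that $b_N^\beta\asymp|t|^{-\beta}$. On the head $n\le N$ one checks that, with overwhelming probability, $|t|\max_y N_n(y)\le u_0$ (because $(1-1/\alpha)/\delta<1$), so the small-argument estimate applies at every site and the inner sum is $\le\varepsilon|t|^\beta V_n$; since $\sum_{n\le N}\mathbb E[V_n]\asymp N^{\delta\beta+1}$ and $N\asymp|t|^{-1/\delta}$, the head contribution is $\lesssim\varepsilon\,C^{\delta\beta+1}\gamma(t)$. On the tail $n>N$ I would bound each difference by $|\mathbb E[e^{itZ_n}\mid S]|+e^{-A_1|t|^\beta V_n}$ and control both terms by their exponential decay, using Lemma~\ref{kaa} (resp.\ Lemma~\ref{controlL}) to recognise the resulting sum as a tail of the Gamma-type integral defining $\gamma$; this tail is $\le\rho(C)\gamma(t)$ with $\rho(C)\to 0$ as $C\to\infty$.

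Combining the two regimes gives $\limsup_{t\to0}\gamma(t)^{-1}\sum_n|\mathbb E[e^{itZ_n}]-\mathbb E[e^{-|t|^\beta V_n(A_1+iA_2\,\text{sgn}(t))}]|\le \text{const}\cdot\varepsilon\,C^{\delta\beta+1}+\rho(C)$; choosing $C$ large to make $\rho(C)$ small and then $u_0$ small (hence $\varepsilon$ small) to beat the factor $C^{\delta\beta+1}$ forces the $\limsup$ to be $0$. The second displayed identity in ii) then follows immediately since $|\tilde\psi(t)-\psi(t)|\le\sum_n|\mathbb E[e^{itZ_n}]-\mathbb E[e^{-|t|^\beta V_n(A_1+iA_2\,\text{sgn}(t))}]|$. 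I expect the main obstacle to be the tail estimate: one must transfer the exponential decay from the exact factor $\phi$ to $\varphi_\xi$ uniformly, which in the lattice case again requires the positive-density argument to handle the large arguments $tN_n(y)$, and one must control the random maximal local time $\max_y N_n(y)$ so that the small-argument regime genuinely covers the whole head.
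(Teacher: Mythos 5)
Your overall architecture is the same as the paper's: condition on the walk, compare $\varphi_\xi$ with $\phi$ factor by factor via the domain-of-attraction estimate $|\varphi_\xi(u)-\phi(u)|\le |u|^\beta h(|u|)$ in the regime where all arguments $tN_n(y)$ are small, and treat separately the values of $n$ for which some argument can be large. Your head computation (cutting at $N\asymp|t|^{-1/\delta}$ and paying $\varepsilon C^{\delta\beta+1}\gamma(t)$) is a legitimate variant of the paper's, which instead retains the factor $e^{-\sigma|t|^\beta V_n}$ inside the difference bound and can therefore sum over the whole small-argument regime at once; and the second displayed limit in ii) does follow from the first exactly as you say.

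The genuine gap is the large-argument regime --- the values of $n$ for which $|t|\sup_yN_n(y)$ can exceed $u_0$ --- and, for the same reason, part i). What is needed there is a bound of the type: on events $\Omega_n$ with $\mathbb{P}(\Omega_n)=1-o(n^{-p})$ for all $p$, one has, for all $|t|>n^{-\delta+\bar\eta}$,
$$\prod_{y\in\mathbb Z}\big|\varphi_\xi(tN_n(y))\big|\le e^{-cn^\theta},$$
and the paper does not prove this from scratch: it quotes Propositions 8--10 of \cite{BFFN} and Propositions 14--15 of \cite{FFN}, together with Lemma 6 of \cite{BFFN} and Lemma 12 of \cite{FFN} for the events $\Omega_n$ controlling $\sup_yN_n(y)$ and $V_n$ (which you also use implicitly every time you write ``with overwhelming probability''). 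Your substitute in the strongly non-lattice case --- the bound $\prod_y|\varphi_\xi(tN_n(y))|\le\eta_r^{R_n}$ plus concentration of the range --- can be made rigorous given those local-time controls (e.g. via $R_n\ge n/\sup_yN_n(y)$), so that half is essentially sound, if under-justified. But in the lattice case your argument breaks down: the positive lower density of the good set $G_t=\{j:\operatorname{dist}(tj,2\pi\mathbb Z)\ge\epsilon\}$ says nothing about how many visited sites have local time \emph{in} $G_t$. Local times are integer-valued, highly non-uniform and strongly correlated, and nothing rules out, a priori, that the values $N_n(y)$ cluster on the complement of $G_t$ (think of $t$ near $\pi$ and the question of whether a positive fraction of sites have odd local time). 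The assertion ``this count grows linearly in $R_n$'' is precisely the statement whose proof is the technical core of Propositions 8--10 of \cite{BFFN}; stating it is not proving it, and nothing in your proposal gives a handle on it. Since you yourself flag this as ``the main obstacle,'' the fair conclusion is that your argument reduces the Proposition to exactly the external estimates the paper imports, without supplying them.
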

\begin{proof}
In  order to prove {\it ii)}, we  show that 
$$\lim_{t\rightarrow 0}
 \frac 1{\gamma(t)}
\sum_{n\ge 1} \left| {\mathbb E}\left[\prod_{y\in\mathbb Z}\varphi_\xi(tN_n(y))\right]
 - {\mathbb E}\left[e^{-|t|^\beta V_n(A_1+iA_2 sgn(t))}\right]\right|=0.$$
{}From Lemma 6 in \cite{BFFN} and Lemma 12  in \cite{FFN}, 
for every $\eta>0$ and every $n\ge 1$, there exists a subset 
$\Omega_n$ such that  
for every $p> 1$, $\mathbb{P}(\Omega_n)=1- o(n^{-p})$ and such that, on $\Omega_n$,
we have 
$$N_n^{*} = \sup_x N_n(x) \leq n^{1-\frac 1\alpha +\eta }\ \ \ \mbox{and}\ \ \ 
V_n \geq n^{\delta\beta - \eta'},$$
with $\eta'=\frac{\eta\beta}2$ if $\alpha>1,\beta>1$ ; $\eta'=\eta(1-\beta)$ if 
$\alpha>1,\beta\le 1$ and $\eta'=\eta(1-\beta)_+$ if $\alpha=1$.
Hence, it is enough to prove that
$$\sum_{n\geq 1}\left| {\mathbb E}\left[E_n(t)
{\mathbf 1}_{\Omega_n}  \right]\right| = o(\gamma(t)) \ \ \mbox{as}\ \ t\rightarrow 0,$$
with $E_n(t):=\prod_{y\in\mathbb Z}\varphi_\xi(tN_n(y)) - 
e^{-|t|^\beta V_n(A_1+iA_2 sgn(t))}$.

\noindent In \cite{BFFN,FFN}, we also define some $\bar\eta\le \eta\max(1,\beta^{-1})$ 
and we take some
$\eta$ such that $\eta+\bar\eta<\frac 1{\alpha\beta}$. Hence, for every $\varepsilon_0>0$, there exists $n_1$ such that for every $n\ge n_1$, we have
$n^{\eta+\bar\eta-\frac 1{\alpha\beta}} \le \varepsilon_0$. 

\noindent In the proofs of propositions 8, 9 and 10 of \cite{BFFN} 
(and propositions 14, 15 of \cite{FFN}) or using the strong lattice property, 
we prove that there exist
$c>0$, $\theta>0$ and $n_0$ such that for every $t$ and every integer $n\ge n_0$ and such that 
$|t|>n^{-\delta+\bar\eta}$, we have, on $\Omega_n$,
$$\prod_{y\in\mathbb Z}|\varphi_\xi(tN_n(y))|  \le  e^{-cn^\theta}\ \ 
\mbox{and}\ \ 
\prod_{y\in\mathbb Z}|\phi(tN_n(y))|  \le  e^{-cn^\theta}.$$
Now, let $t$ and $n\ge n_1$ be such that $|t|\le n^{-\delta+\bar\eta}$.
Recall that we have
$$\Big|\varphi_\xi(u)-\phi(u)\Big|\le |u|^\beta h(|u|),$$
with $h$ a continuous and monotone function on $[0;+\infty)$ vanishing in 0.
Therefore there exist $\varepsilon_0>0$ and $\sigma>0$ such that, for every
$u\in[-\varepsilon_0;\varepsilon_0]$, we have
$$\max(|\phi(u)|,|\varphi_\xi(u)|)\le \exp(-\sigma |u|^\beta).$$
We have
$$|E_n(t)| \le
 \sum_y  \left(\prod_{z<y} |\varphi_\xi(tN_n(z))|\right)
\left|\varphi_\xi(tN_n(y))- \phi(tN_n(y)) \right|\left(\prod_{z>y} |\phi(tN_n(z))| \right).
$$ 
Now, since $|t|\le n^{-\delta+\bar\eta}$, on $\Omega_n$, for every $y\in\mathbb Z$,
 we have $|t| N_n(y)\leq
  n^{\eta+\bar\eta-\frac 1{\alpha\beta}} \leq \varepsilon_0$, 
we get
\begin{eqnarray*} 
|E_n(t)|
&\le&  \sum_y   h(n^{\eta+\bar\eta-\frac 1{\alpha\beta}}) |t|^\beta
  N_n(y)^\beta  \exp(-\sigma|t|^\beta V_n) \exp(\sigma\varepsilon_0^{\beta})\\
&\le&   h(n^{\eta+\bar\eta-\frac 1{\alpha\beta}}) |t|^\beta
  V_n  \exp(-\sigma|t|^\beta V_n) \exp(\sigma\varepsilon_0^{\beta}).
\end{eqnarray*}

\noindent Now, we fix some $t\ne 0$.
Let us write 
$${\mathcal N}_1(t):=\{n\ge 1\ :\ n\ge n_0,\ |t|>n^{-\delta+\bar\eta}\}$$
and
$${\mathcal N}_2(t):=\{n\ge 1\ :\ n\ge n_1,\ |t|\leq n^{-\delta+\bar\eta},\  n>  
|t|^{-\frac 1{2\delta} }\}.$$
We have
$$\sum_{n\le \max(n_0,n_1)} |E_n(t)| \le 2 \max(n_0,n_1), $$
$$\sum_{n\le |t|^{-\frac 1{2\delta} } } |E_n(t)| \le 2 |t|^{-\frac 1{2\delta} } =
 o(\gamma(t)), \ \ \mbox{as }\ t\rightarrow 0, $$
$$\sum_{n\in{\mathcal N}_1(t)}|E_n(t)|\le
       2 \sum_{n\ge 1} e^{-cn^\theta}$$
and
\begin{eqnarray*}
\sum_{n\in{\mathcal N}_2(t)}  \mathbb{E} [|E_n(t)| \mbox{\bf 1}_{\Omega_n} ]  &\le &
    \exp(\sigma\varepsilon_0^{\beta})    h\left(t^{ -\frac 1{2\delta}\left(\eta+\bar\eta-\frac 1{\alpha\beta}  \right)}  \right) |t|^\beta
  \mathbb{E}  \Big[ \sum_{n\ge 1} V_n  \exp(-\sigma|t|^\beta V_n)\Big]  =  
o(\gamma(t)),
    \end{eqnarray*}
as $t\rightarrow 0$, using Proposition~\ref{series-2} and the continuity of 
the function $h$ at 0 (and the fact that 
$|t|^\beta V_n\exp(-\sigma|t|^\beta V_n)\le k_0 \exp(-\frac 1 2 \sigma|t|^\beta V_n)$
for some $k_0>0$). 
Then, ii)- is proved and i)- can easily be deduced 
from the above arguments.
\end{proof}

The integrand in (\ref{int-1}) is bounded by 
$\Theta(t):= |\hat{h} (t)|  |1-e^{-ita}|\ \sum_{n\geq 1}  | {\mathbb E}[e^{itZ_n}] | $.

Let $r>0$, on the set $\{t;  |t| \geq r\}$, by i)- from Proposition \ref{tildepsi}, since $\hat{h}$ is integrable, $\Theta$ is 
integrable. From Propositions~\ref{series-2} and \ref{tildepsi} (item ii)-) and from 
the fact that $\hat{h}$ is continuous at 0,  
$\Theta(t)$ is in $O\left(|t| \gamma(t)\right)$ (at $t=0$), which is integrable in the 
neighborhood of 0 in all cases considered in Theorem \ref{threc2} except $(\alpha,\beta)=(1,2)$. From the dominated convergence theorem, we deduce that 
\begin{equation}\label{ControleKna}
\lim_{n\rightarrow +\infty} K_{n,a}(h) =\frac{1}{2\pi} \int_{\mathcal I} 
\hat{h} (t) \tilde\psi(t) \left( 1- e^{-ita} \right) dt.
\end{equation}
In the case $(\alpha,\beta)=(1,2)$, by assumption, for every integer $n\ge 1$, 
the function $t\rightarrow \hat{h}(t) \sum_{k=1}^n \mathbb{E} [e^{i t Z_k} ] $  being even, we have
 \begin{equation}\label{int-2} 
K_{n,a}(h)= \frac{1}{2\pi} \int_{\mathcal I} \hat{h} (t) \left(\sum_{k=1}^n \mathbb{E} [e^{i t Z_k} ] \right) \left( 1- \cos(ta) \right) dt.
\end{equation}
The integrand in (\ref{int-2}) is uniformly bounded in $n$ by a function in $O\left( \log(1/|t| )^{-1} \right)$ (at $t=0$), which is integrable in the neighborhood of 0. From the dominated convergence theorem, we deduce that 
\begin{equation}\label{ControleKna2}
\lim_{n\rightarrow +\infty} K_{n,a}(h) =\frac{1}{2\pi} \int_{\mathcal I} 
\hat{h} (t) \tilde\psi(t) \left( 1- \cos(ta) \right) dt.
\end{equation}

{\it In the rest of the proof we only consider the strongly non-lattice case, the lattice case can  be handled in the same way}.

\noindent \underline{Let us first consider the case $\alpha>1,\beta\in(1,2]$}. 
We recall that, in this case, we have set 
$$C=(\delta \beta)^{-1} 
 \Gamma(\frac 1{\delta\beta}){\mathbb E}[|L|_\beta^{-\frac 1\delta}].$$
Since the function $t\rightarrow \hat{h}(t) \tilde\psi(t)$ is integrable on 
$\mathcal{I}\setminus[-\pi,\pi]$ (note that $\hat{h}$ is integrable and $\tilde\psi$ is bounded on   
$\mathcal{I}\setminus[-\pi,\pi]$ by Proposition \ref{tildepsi}), we have 
$$ \lim_{a\rightarrow +\infty} \frac{a^{1-1/\delta}}{2\pi} 
  \int_{\{|t|\geq \pi\}} \left| \hat{h} (t) \tilde\psi(t) 
\left( 1- e^{-ita} \right) \right| \, dt = 0.$$
We define the functions
\begin{equation}\label{fonctiong} 
g(t) := (1-e^{-it}) |t|^{-1/\delta}(A_1+iA_2\text{sgn}(t))^{-1/\delta\beta},\ \ 
g_{a}(t) := a g(at)
\end{equation}
and $f(t) := {\bf 1}_{[-\pi,\pi]} (t)\  \hat{h} (t) |t|^{1/\delta} 
\tilde\psi(t)(A_1+iA_2\text{sgn}(t))^{1/\delta\beta}$.   We have: 
$$
\frac{a^{1-1/\delta}}{2\pi} \int_{\{|t|\leq \pi\}} \hat{h} (t) \tilde\psi(t) \left( 1- e^{-ita} \right) dt 
= \frac{1}{2\pi}\int_{\mathbb{R}} f(t)\, g_{a}(t) dt = \frac{1}{2\pi}\, \big(f*g_{a}\big)(0). 
$$
Since $g$ is integrable on $\mathbb{R}$ and $f$ is bounded on $\mathbb{R}$ and continuous at $t=0$ with $f(0) = C\hat{h} (0)$ (by Propositions 4 and 12), it follows  
from classical arguments of approximate identity that 
$$\lim_{a\rightarrow +\infty} (f*g_{a}\big)(0) = C\hat{h} (0)\int_{\mathbb{R}} g(t)dt.$$  
Let us observe that 
$$\int_{\mathbb{R}} g(t) dt = 2 \textrm{Re}\left[(A_1+iA_2)^{-\frac 1{\delta\beta}}
     \int_0^{\infty} \frac{1-e^{-it}}{t^{1/\delta}}\, dt \right].$$
By applying the residue theorem to the function $z\mapsto z^{-1/\delta} 
(1-e^{-iz})$ with the contour in the complexe plane defined as follows~: 
the line segment from  $-ir$ to $-iR$ ($r<R$), the circular arc connecting $-iR$ to $R$, 
the line segment from $R$ to $r$ and the circular arc from $r$ to $-ir$ and letting 
$r\rightarrow 0, R\rightarrow +\infty$, we get that 
$$\int_0^{\infty} \frac{1-e^{-it}}{t^{1/\delta}}dt= \frac{\delta}{1-\delta} \Gamma\Big(2-\frac{1}{\delta}\Big)e^{i\frac{\pi}{2\delta}(1-\delta)}.$$ 
From this formula we easily deduce the first statement of theorem \ref{threc2}
using the fact that
$$(A_1+iA_2)^{-\frac 1{\delta\beta}}=\frac{e^{-i\frac\theta{\delta\beta}}}{(A_1^2+A_2^2)
   ^{\frac 1{2\delta\beta}}},\ \ \mbox{with}\ \ \theta=\arctan\left(\frac {A_2}{A_1}\right).$$
\underline{Now assume $\alpha\ge 1,\beta=1$ or $\alpha=1,\beta\in(1,2)$}. 
We have $\gamma(t)= b_t |t|^{-\beta}(-\log |t|)^{1-\beta}$
(with $b_t=A_1^{-1}$ if $\beta=1$ and with $b_t=c^{-1}(A_1+iA_2 sgn(t))^{-1}$ if $\alpha=1,\beta\in(1,2)$).
Moreover, by combining propositions \ref{series-2} and  \ref{tildepsi}, we have
$$ \lim_{t\rightarrow 0} \left| (\gamma(t))^{-1}  \tilde\psi(t) - 1\right| =0.$$ 
Hence, for every $\varepsilon\in (0,1)$, there exists $0<A_{\varepsilon}<1$ such that
\begin{equation}\label{estim}
\forall t,\ \ |t|\leq A_{\varepsilon}\ \Rightarrow\ [\ |\tilde\psi(t) -\gamma(t)|
   <\varepsilon\gamma(t)\ \mbox{and}\ |\hat{h}(t)-\hat{h}(0)|<\varepsilon\ ].
\end{equation}
Since  $\tilde\psi$ is bounded on $[A_{\varepsilon},+\infty[$ and $\hat{h}$ is integrable
on $\mathcal I$, we have
$$\left|\frac{1}{2\pi} \int_{t\in\mathcal I,\ |t|\geq A_{\varepsilon}} 
   \hat{h}(t) \tilde\psi(t) (1-e^{-ita})dt\right|\leq C(\varepsilon).$$
Let $a$ be such that $a\ge A_\varepsilon^{-1/\beta}$. We have
$$\left|\frac{1}{2\pi} \int_{\{|t| < a^{-\beta}\}} 
   \hat{h}(t) \tilde\psi(t) (1-e^{-ita})dt\right|\leq \frac{a}{\pi} || \hat{h}||_{\infty} 
\int_0^{a^{-\beta}} t\, |\gamma(t)|\, dt,$$
that can be neglected as $a$ goes to infinity since
$$\int_0^{a^{-\beta}} |at\gamma(t)|\, dt=\mathcal{O}\left( a^{(\beta-1)^2} 
         \log(a)^{1-\beta}\right)=o(  a^{\beta-1}  \log (a)^{1-\beta} )\ 
\mbox{as}\ a\rightarrow \infty\ 
        \ \mbox{if}\ \ \alpha=1,\  \beta\in (1,2)$$
as $a$ goes to infinity
and since
$$ \int_0^{a^{-\beta}} a t |\gamma(t)|\, dt=\mathcal{O}(1) = o(\log(a))\ \ 
\mbox{as}\ a\rightarrow \infty\ \mbox{if}\ \ \beta=1.$$
It remains to estimate  $\frac{1}{2\pi} 
\int_{\{a^{-\beta}\leq |t|\leq A_{\varepsilon}\}} \hat{h}(t) \tilde\psi(t) (1-e^{-ita})dt$ that we decompose into two parts:
\begin{equation*}
I_1(a):= \frac{1}{2\pi} \int_{\{a^{-\beta}\leq |t| < A_{\varepsilon}\} } 
[ \hat{h}(t) \tilde\psi(t) - \hat{h}(0)\gamma(t)]  (1-e^{-ita})  dt 
\end{equation*} 
and 
$$I_2(a):=  \frac{\hat{h}(0)}{2\pi} \int_{\{a^{-\beta}\leq |t| < A_{\varepsilon}\} }  (1-e^{-ita}) \gamma(t)  dt.  $$ 
\textbullet We first estimate $I_2(a)$ for $a$ large. Remark that  by the change of variables $u= at$,
$$I_2(a)=  \frac{\hat{h}(0) }{2\pi a} \int_{\{a^{1-\beta}<|u|<a A_{\varepsilon}\}}
  (1-e^{-iu})\gamma\Big(\frac{u}{a}\Big)\, du.  $$ 
We treat separately the cases $\beta=1$ and $\alpha=1,\beta\in (1,2)$. 
If $\beta=1$, we have
$$ \frac 1{2\pi a}\int_{\{ 1 <|u|< a A_{\varepsilon}\}}
  (1-e^{-iu})\gamma\Big(\frac{u}{a}\Big)\, du= \frac{1}{A_1 \pi}\int_{\{1<u< a A_{\varepsilon}\}}
  \frac{1-\cos u}{u}\, du\sim \frac 1{A_1 \pi}\log (a) $$
since
$$\lim_{x\rightarrow +\infty} \frac{1}{\log (x)} \int_{1}^{x} \frac{1-\cos(u)}{u} du=1.$$
This comes from the fact that $\left(\int_1^x\frac{\cos(t)}t\, dt\right)_x$
is bounded.

\noindent If $\alpha=1$ and $\beta\in(1,2)$, we have

$\displaystyle\frac 1{2\pi a}\int_{\{a^{1-\beta}<|u|<a A_{\varepsilon}\}}
  (1-e^{- iu})\gamma\Big(\frac{u}{a}\Big)\, du=$
\begin{eqnarray*}
&=& \frac {a^{\beta-1}\beta^{1-\beta}}{2\pi c}\int_{\{a^{1-\beta}<|u|<a A_{\varepsilon}\}}
  (1-e^{-iu}) 
(A_1+iA_2 \text{sgn}(u))^{-1}|u|^{-\beta}(\log(a)-\log |u|)^{1-\beta}\, du\\
&=& \frac { a^{\beta-1}(\log (a^\beta))^{1-\beta}}
   {2\pi c}\int_{\mathbb R} f_a(u)\, du,
\end{eqnarray*}
with 
$$f_a(u):={\mathbf 1}_{[a^{1-\beta},a A_{\varepsilon}]}(|u|)
    (1-e^{-iu})
   (A_1+iA_2 \text{sgn}(u))^{-1}|u|^{-\beta}\left(1-\frac{\log |u|}{\log a}\right)^{1-\beta}.$$
We observe that
$$|f_a(u)|\le  F(u):=\min(1,|u|)|A_1+iA_2|^{-1}|u|^{-\beta}\beta^{1-\beta}$$
(with $F$ integrable on $\mathbb R$ since $\beta\in(1,2)$)
and that we have
$$\forall u\ne 0,\ \lim_{a\rightarrow +\infty} f_a(u) =     (1-e^{-iu})
   (A_1+iA_2 \text{sgn}(u))^{-1}|u|^{-\beta}=:g(u).$$
So,
$$\lim_{a\rightarrow +\infty}
  \frac { (\log (a^\beta))^{\beta-1} } {2\pi a^{\beta}}\int_{\{a^{1-\beta}<|u|<a A_{\varepsilon}\}}
  (1-e^{-iu})\gamma(u/a)\, du=\frac {1}
   {2\pi c}\int_{\mathbb R}g(u)\, du.$$
We recall that
$$\int_{\mathbb{R}} g(t) dt = 2 \textrm{Re}\left[(A_1+iA_2)^{-1}\int_0^\infty
 \frac{1-e^{-it}}{t^\beta}\, dt\right]$$
and that
$$\int_0^{\infty} \frac{1-e^{-it}}{t^{\beta}}dt= 
 \frac {\Gamma(2-\beta)}{\beta-1} e^{\frac i 2(\beta-1)\pi}.$$
This gives
$$\lim_{a\rightarrow +\infty} \frac { (\log (a^\beta))^{\beta-1} } { a^{\beta-1}} I_2(a)=D_1.$$
\textbullet Second,  we estimate $I_1(a)$. From (\ref{estim}), we have 
 \begin{eqnarray*}
|I_1(a) | &\leq  & \frac{\varepsilon(\mathcal O(1)+|\hat{h}(0)|)}{\pi} 
  \int_{\{a^{-\beta}\leq |t| < A_{\varepsilon} \}} |1-e^{-ita}|  |\gamma(t)|\, dt \\
  &\leq & C  \frac{\varepsilon}{ a}\int_{\{a^{1-\beta}<|u|<a A_{\varepsilon}\}}
  |1-e^{-iu}|  \gamma\Big(\frac{u}{a}\Big)\, du.
  \end{eqnarray*} 
When $\beta=1$, $|I_1(a)| \leq \varepsilon \log(a)$. When $\alpha=1$ and $\beta\in (1,2)$,  from the above computations, we also have 
$$\lim_{a\rightarrow +\infty}
  \frac { (\log a)^{\beta-1} } {2\pi a^{\beta}}\int_{\{a^{1-\beta}<|u|<a A_{\varepsilon}\}}
  |1-e^{-iu}|  \gamma\Big(\frac{u}{a}\Big)\, du=\frac {1}
   {2\pi c}\int_{\mathbb R} |g(u) |\, du.$$
 Therefore, we get $| I_1(a)| \leq C  \varepsilon   a^{\beta-1} (\log a)^{1-\beta}.$
 
\noindent \underline{The case $(\alpha,\beta)=(1,2)$} can be handled in the same way as $\alpha=1, \ \beta\in(1,2)$ using the inequality 
$1-\cos(t)\leq \min (2, t^2 ).$ Details are omitted.

\section{Proof of Theorem~\ref{threc0} (transient case)}
We suppose that $\alpha>1$ and $\beta<1$. So $\delta>1$.
We will again use the notation 
$$C=(\delta \beta)^{-1} 
 \Gamma(\frac 1{\delta\beta}){\mathbb E}[|L|_\beta^{-\frac 1\delta}].$$
Let $h :{\mathbb R} \rightarrow {\mathbb C}$ be a Lebesgue-integrable function such that its Fourier transform 
$\hat h$ is differentiable, with $\hat h$ and $(\hat h)'$ Lebesgue-integrable. Then, using 
the Fourier inversion formula, we obtain for every $n\ge 1$,
$$2\pi{\mathbb E}[h(Z_n-a)] = 
   \int_{\mathbb R} \hat h(t){\mathbb E}[e^{itZ_n}]e^{-i ta}\, dt.$$
We get 
$$2\pi\sum_{n\ge 1}{\mathbb E}[h(Z_n-a)] = 
   \sum_{n\ge 1}\int_{\mathbb R} \hat h(t){\mathbb E}[e^{itZ_n}]  e^{-i ta}
  \, dt.$$
Since here $\beta<1$ (thus $\delta>1$), the function  $t\mapsto \hat{h}(t) \sum_{n\ge 1}\left| {\mathbb E} [e^{itZ_n}]\right|$ is integrable (note that $\sum_{n\ge 1}\left| {\mathbb E} [e^{itZ_n}]\right|$
corresponds to the case $A_2=0$, then use Proposition \ref{series-1-trans} and (\ref{lim-psi})). Therefore, from (\ref{def-psi}), we have
$$2\pi\sum_{n\ge 1}{\mathbb E}[h(Z_n-a)] = 
   \int_{\mathbb R} \hat h(t)\, \psi(t)\,  e^{-i ta}
  \, dt.$$
Let ${\mathcal S}({\mathbb R})$  denote the so-called Schwartz space. 
Let $r\in(0,+\infty)$ and let $\chi\in{\mathcal S}({\mathbb R})$ be such that 
\begin{equation}\label{cond-chi} 
|\chi|\leq 1\quad \text{ and } \quad \forall t\in[-r;r],\ \chi(t)=1.
\end{equation}
We have
$$2\pi\sum_{n\ge 1}{\mathbb E}[h(Z_n-a)] = I_1(a)+I_2(a)+I_3(a),$$
with
\begin{subequations}
\begin{eqnarray}
& & I_1(a):=   C\, \hat h(0) \int_{\mathbb R}\chi(t)\, |t|^{-\frac 1\delta}(A_1+iA_2\text{sgn}(t))^{-\frac1{\delta\beta}} \, e^{-i ta}
  \, dt,  \nonumber \\
& &  I_2(a):=   \int_{\mathbb R} \chi(t)\left\{\hat h(t)\psi(t) 
     - C\, \hat h(0) |t|^{-\frac 1\delta}(A_1+iA_2\text{sgn}(t))^{-\frac1{\delta\beta}} \right\} e^{-i ta}
  \, dt, \nonumber \\
& &  I_3(a):=   \int_{\{|t|>r\}}(1-\chi(t))\, \hat h(t)\, \psi(t)\, e^{-i ta}
  \, dt. \nonumber
\end{eqnarray}
\end{subequations}
The study of $I_3(a)$ is easy. Set $g_3(t) = (1-\chi(t))\hat h(t)\psi(t)$. 
From (\ref{cond-chi}), we have $I_3(a)= \widehat{g_3}(a)$, 
and from Propositions~\ref{series-1-trans} and 4, $g_3$ and $g_3'$ are Lebesgue-integrable on ${\mathbb R}$. An integration by parts then gives
$$I_3(a)=O(a^{-1}) = o(a^{1/\delta-1})\ \mbox{ as }a\mbox{ goes to }
\infty. $$
The next two subsections are devoted to the study of $I_1(a)$ and $I_2(a)$.  
\subsection{Study of $I_1(a)$} Let us prove that:  
$$ \lim_{a\rightarrow +\infty} a^{1/\delta-1}\, I_1(a) = C\, \hat h(0)\, c^-_{\delta,\beta}$$
where $c_{\delta,\beta}^-$ is a constant defined in Lemma~\ref{fourier-dist} below. The last property follows from Lemma~\ref{id-approche} below. Before let us establish the following.  
\begin{lem} \label{fourier-dist}
For every function $\displaystyle g\in{\mathcal S}({\mathbb R}),$
$$\ \int_{\mathbb R} \frac{\hat g(u)}{|u|^{1/\delta}(A_1+iA_2\ \text{\rm sgn}(u))^{\frac 1{\delta\beta}}}\, du = \int_{\mathbb R} \frac{g(v)}{|v|^{1-\frac 1\delta}} \left( c_{\delta,\beta}^{+} {\bf 1}_{\RR_+}(v) + c_{\delta,\beta}^{-} {\bf 1}_{\RR_-}(v)\right)\, dv,$$
where 
$$c_{\delta,\beta}^{+}:=  \frac{2\ \Gamma(1-\frac{1}{\delta})}{(A_1^2+A_2^2)^{\frac 1{2\delta\beta}}} \sin\left( \frac{1}{\delta}\Big(\frac{\pi}{2} + \frac{1}{\beta} \arctan 
\Big(\frac{A_2}{A_1}\Big)\Big) \right)$$
and
$$c_{\delta,\beta}^{-}:=  \frac{2\ \Gamma(1-\frac{1}{\delta})}{(A_1^2+A_2^2)^{\frac 1{2\delta\beta}}} \sin\left( \frac{1}{\delta}\Big(\frac{\pi}{2} - \frac{1}{\beta} \arctan 
\Big(\frac{A_2}{A_1}\Big)\Big) \right).$$
\end{lem}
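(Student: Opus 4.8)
The plan is to read the left-hand side as the pairing of $\hat g$ against the homogeneous kernel
$$K(u) := |u|^{-1/\delta}\,(A_1+iA_2\,\text{sgn}(u))^{-1/(\delta\beta)},$$
and to transfer the Fourier transform onto $K$. Writing $s:=1/\delta$ (note $s\in(0,1)$ since $\delta>1$ in the transient regime), $K$ is locally integrable and bounded at infinity, hence defines a tempered distribution, while $\hat g\in{\mathcal S}({\mathbb R})$. With the convention $\hat g(u)=\int_{\mathbb R} g(v)e^{iuv}\,dv$ fixed in the introduction, the claimed identity is precisely $\int_{\mathbb R}\hat g(u)K(u)\,du=\int_{\mathbb R} g(v)\,\widehat K(v)\,dv$ together with an explicit evaluation of $\widehat K$; so the whole statement reduces to showing
$$\widehat K(v)=\bigl(c^{+}_{\delta,\beta}{\bf 1}_{\RR_+}(v)+c^{-}_{\delta,\beta}{\bf 1}_{\RR_-}(v)\bigr)\,|v|^{1/\delta-1}.$$

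To make the Fubini exchange licit despite $K$ not being integrable at infinity, I would first regularize: for $\varepsilon>0$ set $K_\varepsilon(u):=K(u)e^{-\varepsilon|u|}$, which is integrable, so that $\int\hat g\,K_\varepsilon=\int g\,\widehat{K_\varepsilon}$ holds by Fubini. Splitting $K$ into its restrictions to $\RR_+$ and $\RR_-$ and using the elementary identity $\int_0^{\infty}u^{-s}e^{-zu}\,du=\Gamma(1-s)z^{s-1}$ (valid for $\text{Re}(z)>0$ with the principal branch, by analytic continuation from $z>0$), the regularized transform is obtained in closed form with $z=\varepsilon\mp iv$. Letting $\varepsilon\to0^+$ gives $(\varepsilon\mp iv)^{s-1}\to(\mp iv)^{s-1}$ and hence the pointwise value of $\widehat K(v)$ for $v\neq0$. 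The passage $\varepsilon\to0$ on the left-hand side is justified by dominated convergence, since $|\hat g\,K_\varepsilon|\le|\hat g|\,|K|$ with $\hat g\in{\mathcal S}({\mathbb R})$; on the right-hand side it follows from the bound $|\widehat{K_\varepsilon}(v)|\le C|v|^{1/\delta-1}$, which is integrable against the bounded, rapidly decreasing $g$ because $1/\delta-1>-1$.

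It then remains to assemble the branch factors. Writing $A_1+iA_2=(A_1^2+A_2^2)^{1/2}e^{i\theta}$ with $\theta=\arctan(A_2/A_1)$, and using $(-iv)^{s-1}=v^{s-1}e^{i\pi(1-s)/2}$ for $v>0$ and $=|v|^{s-1}e^{-i\pi(1-s)/2}$ for $v<0$ (with the conjugate phases for the $\RR_-$ piece), the two half-line contributions combine into $2\cos$ of a phase, against the common prefactor $\Gamma(1-1/\delta)(A_1^2+A_2^2)^{-1/(2\delta\beta)}$. With $s=1/\delta$ the identity $\cos(\tfrac\pi2-x)=\sin x$ turns these cosines into the sines appearing in $c^{\pm}_{\delta,\beta}$, which finishes the proof. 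The only genuinely delicate points are the two regularization limits above; the branch bookkeeping and the trigonometric simplification are routine, and they mirror the contour computation of $\int_0^\infty(1-e^{-it})t^{-1/\delta}\,dt$ already carried out in Section 3.
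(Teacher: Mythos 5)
Your proof is correct, and it takes a genuinely different route from the paper's. You damp the homogeneous kernel $K(u)=|u|^{-1/\delta}(A_1+iA_2\,\mathrm{sgn}(u))^{-1/(\delta\beta)}$ by $e^{-\varepsilon|u|}$, compute $\widehat{K_\varepsilon}$ in closed form on each half-line from $\int_0^{+\infty}u^{-s}e^{-zu}\,du=\Gamma(1-s)\,z^{s-1}$ (with $s=1/\delta$, $\mathrm{Re}(z)>0$, $z=\varepsilon\mp iv$), and pass to the limit $\varepsilon\to 0^+$ by dominated convergence on both sides; the constants then fall out of the principal-branch phases, and your bookkeeping does attach $c^{+}_{\delta,\beta}$ to $v>0$ and $c^{-}_{\delta,\beta}$ to $v<0$ as the lemma requires, since for $v>0$ the two half-line terms are conjugate and sum to $2\cos\big(\tfrac{\pi}{2}(1-\tfrac1\delta)-\tfrac{1}{\delta\beta}\arctan\tfrac{A_2}{A_1}\big)=2\sin\big(\tfrac1\delta(\tfrac\pi2+\tfrac1\beta\arctan\tfrac{A_2}{A_1})\big)$ times the common prefactor. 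The paper proceeds differently: it uses the subordination identity $K(u)=\int_0^{+\infty}e^{-x|u|^{1/\delta}(A_1+iA_2\,\mathrm{sgn}(u))^{1/(\delta\beta)}}\,dx$ (licit because $|\arctan(A_2/A_1)|\le\pi\beta/2$ forces the exponent to have positive real part), applies Parseval at each fixed $x$ with $f_x$ the Fourier transform of the stable-type function inside, exploits the scaling $f_x(v)=x^{-\delta}f_1(v/x^{\delta})$ to identify $c^{\pm}_{\delta,\beta}=\int_0^{+\infty}f_1(\pm y)\,\delta^{-1}y^{-1/\delta}\,dy$, and finally evaluates these integrals by a residue-theorem contour rotation. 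Your route buys economy: the uniform bound $|\widehat{K_\varepsilon}(v)|\le C|v|^{1/\delta-1}$ is immediate from the closed form, so both dominated-convergence steps are easy, and a single appeal to analytic continuation of the Gamma integral replaces both the subordination layer and the explicit contour argument. The paper's route keeps every exchange of integrals over exponentials with positive real part (hence manifestly absolutely convergent) and manipulates the same stable-type Fourier transforms that recur throughout the paper, at the cost of an extra layer and a final residue computation. Both are complete proofs of the stated identity.
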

\noindent Note that, since $\delta>1$, the functions $w\mapsto |w|^{-1/\delta}$ and $w\mapsto |w|^{-(1-\frac 1\delta)}$ are Lebesgue-integrable on any neighborhood of $w=0$, so that the two previous integrals  are well defined. 
\begin{proof}
For every $u\ne 0$, we have $$|u|^{-\frac 1\delta} (A_1+iA_2\text{sgn}(u))^{-\frac 1{\delta\beta}}= \int_0^{+\infty}
   e^{-x|u|^{\frac1{\delta}}(A_1+iA_2\text{sgn}(u))^{\frac 1{\delta\beta}}}\, dx.$$ 
For any $x>0$, let us denote by $f_x$ the Fourier transform of the function
$u\mapsto e^{-x|u|^{\frac 1\delta}(A_1+iA_2\text{sgn}(u))^{\frac 1{\delta\beta}}}$. 
By Fubini's theorem and Parseval's identity, we have
\begin{eqnarray*}
\int_{\mathbb R} \frac{\hat g(u)}{|u|^{\frac 1\delta}(A_1+iA_2\text{sgn}(u))^{\frac 1{\delta\beta}}}\, du
    &=&  \int_0^{+\infty}\left(\int_{\mathbb R} \hat g(u) e^{-x|u|^{\frac1{\delta}}(A_1+iA_2\text{sgn}(u))^{\frac 1{\delta\beta}}} \, du\right)dx \\
    &=& \int_0^{+\infty}\left(\int_{\mathbb R}g(v) f_x (v)\, dv\right) dx.
\end{eqnarray*}
Next, since we have: $\forall x>0,\ \forall v\in{\mathbb R},\ f_x(v)
        = x^{-\delta} f_1\left(\frac{v}{x^\delta}\right)$, 
we obtain, from Fubini's theorem, with the change of variable $y=|v|/x^\delta$ and
finally by the dominated convergence theorem (since $c_{\delta,\beta}^\pm$ are
well defined, see below), that
\begin{eqnarray*}
\int_{\mathbb R}\frac{\hat g(u)}{|u|^{\frac 1\delta}(A_1+iA_2\text{sgn}(u))^{\frac 1{\delta\beta}}}\, du
 &=& \lim_{A\rightarrow 0}\int_{\mathbb R} g(v) \left[\int_A^{+\infty} x^{-\delta} f_1
   \left(\frac{v}{x^\delta}\right)\, dx\right]\, dv\\
 &=& \lim_{A\rightarrow 0}\int_{\mathbb R} g(v) |v|^{1/\delta-1} 
     \left[\int_0^{|v|A^{-\delta}}\frac{f_1(\textrm{sgn}(v)y)}{\delta y^{1/\delta}}\, dy\right]\, dv\\
     &=& \int_{\mathbb R}\frac{g(v)}{|v|^{1-1/\delta}}\left(c_{\delta,\beta}^+{\bf 1}_{\RR_+}(v) + c_{\delta,\beta}^{-} {\bf 1}_{\RR_-}(v)\right) \, dv,
\end{eqnarray*}
with 
$$c_{\delta,\beta}^{+}:= \int_0^{+\infty}\frac{f_1(y)}{\delta y^{1/\delta}}\, dy\ \ \mbox{\rm and}\ 
\ c_{\delta,\beta}^{-}:= \int_0^{+\infty}\frac{f_1(-y)}{\delta y^{1/\delta}}\, dy.$$
Let us compute $c_{\delta,\beta}^{+}$. We have
\begin{eqnarray*}
c_{\delta,\beta}^+ &= &\lim_{A\rightarrow +\infty}\frac{1}{\delta} 
    \int_0^{A} f_1(y)\, y^{-1/\delta}\, dy\\
&=& \lim_{A\rightarrow +\infty} \frac{1}{\delta} \int_0^{A} y^{-1/\delta} \left(\int_{\mathbb R}  e^{ixy}   e^{-|x|^{\frac{1}{\delta}} (A_1+iA_2\text{sgn}(x))^{\frac{1}{\delta\beta}}} \, dx \right) dy \\
&=& \lim_{A\rightarrow +\infty} \int_{\mathbb R}|u|^{-\frac 1\delta}e^{iu}\left(
 \int_{\frac {|u|}A}^{+\infty} \frac{1}{\delta}v^{\frac 1\delta-1} 
   e^{-v^{\frac{1}{\delta}} (A_1+iA_2\text{sgn}(u))^{\frac{1}{\delta\beta}}} \, dv \right) du \\
&=& \lim_{A\rightarrow +\infty} \int_{\mathbb R}|u|^{-\frac 1\delta}e^{iu}
\frac{e^{-|u|^{\frac{1}{\delta}} A^{-\frac 1\delta}(A_1+iA_2\text{sgn}
(u))^{\frac{1}{\delta\beta}}}}{(A_1+iA_2\textrm{sgn}(u))^{\frac 1{\delta\beta}}}\, du\\
&=&\lim_{A\rightarrow +\infty} 2 \, \textrm{Re}\left[ 
\int_0^{+\infty}u^{-\frac 1\delta}e^{iu}
\frac{e^{-u^{\frac{1}{\delta}} A^{-\frac 1\delta}(A_1+iA_2
)^{\frac{1}{\delta\beta}}}}{(A_1+iA_2)^{\frac 1{\delta\beta}}}\, du\right],
\end{eqnarray*}
using the change of variables $(u,v)=(yx,x)$.
Now applying the residue theorem to the function $z\mapsto z^{-\frac 1\delta} 
e^{iz} e^{-z^{\frac 1\delta}A^{-\frac 1\delta} (A_1+iA_2)^{\frac{1}{\delta\beta}}}$ with the contour in the complexe plane defined as follows~: 
the line segment from  $r$ to $R$ ($r<R$), the circular arc connecting $R$ to $iR$, 
the line segment from $iR$ to $ir$ and the circular arc from $ir$ to $r$ and letting 
$r\rightarrow 0, R\rightarrow +\infty$, we get that 
$$
\int_0^{+\infty}u^{-\frac 1\delta}e^{iu}
{e^{-u^{\frac{1}{\delta}} A^{-\frac 1\delta}(A_1+iA_2)^{\frac{1}{\delta\beta}}}}\, du
= e^{i\left(\frac \pi 2-\frac \pi{2\delta}\right)}
  \int_0^{+\infty}t^{-\frac 1\delta}e^{-t}
{e^{-t^{\frac{1}{\delta}}e^{\frac {i\pi}{2\delta}} A^{-\frac 1\delta}(A_1+iA_2)^{\frac{1}{\delta\beta}}}}\, dt.
$$
Taking $A\rightarrow +\infty$, we get the expression of $c_{\delta,\beta}^+$.
\end{proof}
\begin{lem} \label{id-approche}
We have: $\displaystyle \lim_{a\rightarrow +\infty}a^{1-1/\delta} 
\int_{\mathbb R}\chi(t) |t|^{-\frac 1\delta} (A_1+iA_2\ \text{\rm sgn}(t))^{-\frac1{\delta\beta}} e^{-i ta}
  \, dt = c_{\delta,\beta}^-$. 
\end{lem}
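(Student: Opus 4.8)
The plan is to reduce the claim to Lemma~\ref{fourier-dist} by recognizing the integrand as the Fourier transform of an explicit Schwartz function, and then to extract the asymptotics by a rescaling-plus-dominated-convergence argument. Write $\Phi(t):=|t|^{-1/\delta}(A_1+iA_2\,\mathrm{sgn}(t))^{-1/(\delta\beta)}$ and look for $g_a\in\mathcal{S}(\mathbb{R})$ with $\hat g_a(t)=\chi(t)e^{-ita}$. With the convention $\hat\chi(s)=\int\chi(t)e^{ist}\,dt$, Fourier inversion shows that $g_a(v)=\frac1{2\pi}\hat\chi(-(a+v))$ works, and since $\hat\chi\in\mathcal{S}(\mathbb{R})$ so is $g_a$. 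Then Lemma~\ref{fourier-dist} applied to $g_a$ turns the integral of interest into
$$\int_{\mathbb{R}}\chi(t)\Phi(t)e^{-ita}\,dt=\int_{\mathbb{R}}\frac{\hat g_a(t)}{|t|^{1/\delta}(A_1+iA_2\,\mathrm{sgn}(t))^{1/(\delta\beta)}}\,dt=\int_{\mathbb{R}}\frac{g_a(v)}{|v|^{1-1/\delta}}\bigl(c_{\delta,\beta}^{+}{\bf 1}_{\RR_+}(v)+c_{\delta,\beta}^{-}{\bf 1}_{\RR_-}(v)\bigr)\,dv.$$

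Next I would insert $g_a(v)=\frac1{2\pi}\hat\chi(-(a+v))$, multiply by $a^{1-1/\delta}$, and substitute $w=a+v$. Writing $\kappa:=1-1/\delta\in(0,1)$, the quantity $a^{1-1/\delta}\int_{\mathbb{R}}\chi(t)\Phi(t)e^{-ita}\,dt$ becomes
$$\frac1{2\pi}\int_{\mathbb{R}}\hat\chi(-w)\Bigl(\frac{a}{|w-a|}\Bigr)^{\kappa}\bigl(c_{\delta,\beta}^{+}{\bf 1}_{\{w>a\}}+c_{\delta,\beta}^{-}{\bf 1}_{\{w<a\}}\bigr)\,dw.$$
For each fixed $w$ the integrand converges, as $a\to+\infty$, to $c_{\delta,\beta}^{-}\hat\chi(-w)$ (eventually $w<a$ and $a/|w-a|\to1$). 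Since $\frac1{2\pi}\int_{\mathbb{R}}\hat\chi(-w)\,dw=\chi(0)=1$ by Fourier inversion, the expected limit is exactly $c_{\delta,\beta}^{-}$, in agreement with the statement.

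The main obstacle is justifying the passage to the limit, because the factor $(a/|w-a|)^{\kappa}$ is unbounded near $w=a$ and the region $\{w<a\}$ grows with $a$. I would handle this by splitting the domain at $|w|=a/2$. On $\{|w|\le a/2\}$ one has $|w-a|\ge a/2$, so $(a/|w-a|)^{\kappa}\le 2^{\kappa}$ and only the $c_{\delta,\beta}^{-}$ term occurs; the integrand is then dominated by $2^{\kappa}\max(|c_{\delta,\beta}^{+}|,|c_{\delta,\beta}^{-}|)\,\min(1,|w|^{-N})$, which is integrable and independent of $a$, so dominated convergence gives the contribution $c_{\delta,\beta}^{-}\,\frac1{2\pi}\int_{\mathbb{R}}\hat\chi(-w)\,dw=c_{\delta,\beta}^{-}$. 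On $\{|w|>a/2\}$ the rapid Schwartz decay $|\hat\chi(-w)|\le C_N|w|^{-N}\le C_N(a/2)^{-N}$ controls everything: splitting once more according to whether $|w-a|<a/4$ or not, and using $\int_{|w-a|<a/4}(a/|w-a|)^{\kappa}\,dw=O(a)$, this part is $O(a^{1-N})$, which tends to $0$ for $N\ge2$. Combining the two pieces yields the claimed limit $c_{\delta,\beta}^{-}$.
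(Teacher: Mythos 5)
Your proof is correct and follows essentially the same route as the paper: your $g_a(v)=\frac{1}{2\pi}\hat\chi(-(a+v))$ is exactly the paper's $\gamma(\cdot+a)$ with $\hat\gamma=\chi$, and both arguments reduce the claim to Lemma~\ref{fourier-dist} applied to this translated Schwartz function. The only difference is in the last step: the paper rescales ($v=wa$) and invokes classical approximate-identity arguments to get $(\tilde\gamma_a*g_\delta)(-1)\to g_\delta(-1)=c_{\delta,\beta}^-$, whereas you translate ($w=a+v$) and verify the same limit by hand via dominated convergence with a splitting at $|w|=a/2$ --- a self-contained but equivalent justification.
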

\begin{proof}
Let $\gamma\in {\mathcal S}({\mathbb R})$ 
such that $\hat \gamma=\chi$, and define: $\forall x\in{\mathbb R},\ \tilde \gamma_{a}(x) := a \gamma(-ax)$.
{}From Lemma~\ref{fourier-dist} and from the change of variable $v=w a$, we get
\begin{eqnarray*}
\int_{\mathbb R} \chi(t) |t|^{-1/\delta}(A_1+iA_2\text{sgn}(t))^{-\frac 1{\delta\beta}}e^{-ita} \, dt 
&=& \int_{\mathbb R} \widehat{\gamma(\cdot+a)}(t)|t|^{-1/\delta}(A_1+iA_2\text{sgn}(t))^{-\frac 1{\delta\beta}}\, dt \\ 
&=&\int_{\mathbb R} \gamma(v+a)|v|^{1/\delta-1}(c_{\delta,\beta}^+{\bf 1}_{\RR_+}(v) + c_{\delta,\beta}^{-} {\bf 1}_{\RR_-}(v))\, dv\\
&=& a^{1/\delta-1}\int_{\mathbb R} a\,  \gamma\left(a\big(w+1\big)\right) g_\delta(w)\, dw\\
&=& a^{1/\delta-1}\int_{\mathbb R}\tilde \gamma_{a}\left(-1-w\right)\, g_\delta(w)\, 
dw\\
&=&  a^{1/\delta-1} \big(\tilde \gamma_{a}*g_\delta\big)(-1),
\end{eqnarray*}
where $*$ denotes the convolution product on ${\mathbb R}$ and $g_\delta(v):=|v|^{1/\delta-1}(c_{\delta,\beta}^+{\bf 1}_{\RR_+}(v) + c_{\delta,\beta}^{-} {\bf 1}_{\RR_-}(v)) $. Observe that we have 
$$\int_{\mathbb R}\tilde \gamma_{a}(w)dw = \int_{\mathbb R} \gamma(y)dy = \chi(0) = 1.$$
Now, from the fact that $\tilde \gamma\in{\mathcal S}({\mathbb R})$ (actually use 
$\sup_{x\in {\mathbb R}} (1+x^2)|\tilde \gamma(x)| < \infty$), that $g_\delta$ is continuous at $-1$ and that the function  $w\rightarrow w^{-2} g_\delta(w)$ is Lebesgue-integrable at infinity, it can be easily deduced from classical arguments of approximate identity that we have (see Prop.~1.14 in D.~Guibourg's thesis \cite{Gui} for details): $\lim_{a\rightarrow +\infty} (\tilde \gamma_{a}*g_\delta)(-1) = g_\delta(-1) = c_{\delta,\beta}^-$. 
\end{proof}
\subsection{Study of $I_2(a)$}
Let us prove that:  
$$I_2(a) = o(a^{1/\delta-1})\ \mbox{ as }a\mbox{ goes to }
\infty. $$
Set $\Phi(t):=\psi(t)-C|t|^{-\frac 1\delta}(A_1+iA_2\text{\rm sgn}(t))^{-\frac 1{\delta\beta}}$. 
We have
$$I_2(a) = \int_{\mathbb R} \chi(t)\left\{\hat h(t)\psi(t) 
     - C\, \hat h(0) |t|^{-\frac 1\delta}   (A_1+iA_2\text{\rm sgn}(t))^{-\frac 1{\delta\beta}}\right\}   e^{-i ta}
  \, dt =  J_1(a)+J_2(a)$$
with
$$J_1(a):=\int_{\mathbb R} \chi(t)\left\{\hat h(t)-\hat h(0)\right\}\psi(t)e^{-ita}\, dt 
\quad  \mbox{and}\quad J_2(a):=\hat h(0) \int_{\mathbb R}  \chi(t)\Phi(t)
 e^{-i ta}  \, dt.$$
Note that $J_1(a) = \widehat{g_1}(-a)$, with $g_1 := \chi(\hat h-\hat h(0))\psi$. From Proposition~\ref{series-2} and since $\hat h$ is 
continuously differentiable, we have $\psi(t) = O(|t|^{-1/\delta})$ and $(\hat h(t)-\hat h(0))\psi'(t) = O(|t|^{-1/\delta})$ when $t\rightarrow 0$. Hence $g_1$ and $g_1'$ are Lebesgue-integrable on ${\mathbb R}$, so that we obtain by integration by parts: 
$$J_1(a)=O(a^{-1}) = o(a^{1-1/\delta})\ \mbox{ as }a\mbox{ goes to }
\infty. $$
To study $J_2(a)$, let us set $G(t) := \chi(t)\Phi(t)$, and write 
\begin{eqnarray}
J_2(a) &=& \hat h(0)\int_{\left\{|t| \leq \frac{2\pi}{a}\right\}}
    G(t)\, e^{-ita}\, dt + \hat h(0)
  \int_{\left\{|t|>\frac{2\pi}{a}\right\}} G(t)\, e^{-ita}\, dt  \nonumber \\ 
&=:& \hat h(0)J_{2,1}(a) + \hat h(0)J_{2,2}(a) \label{j2a-sum}
\end{eqnarray}
where $J_{2,1}(a)$ and $J_{2,2}(a)$ are above defined in an obvious way. From Proposition~\ref{series-2} we have $\Phi(t)=\vartheta_0(t)|t|^{-\frac 1\delta}$, with $\lim_{u\rightarrow 0}\vartheta_0(u)=0$. Since $|\chi| \leq 1$, we obtain: 
\begin{equation}\label{int1} 
\big|J_{2,1}(a)\big| \leq \int_{\left\{|t| \leq \frac{2\pi}{a}\right\}}
    \big|\Phi(t)\big|\, dt \leq \frac {2}{1-\frac 1\delta} \left(\frac{2\pi}{a}\right)^{1-\frac 1\delta} 
    \sup_{|t|\leq \frac{2\pi}{a}} |\vartheta_0(t)|  = o(a^{\frac 1\delta -1}),
\end{equation}
as $a$ goes to infinity. 
Next we have $J_{2,2}(a) =  -\int_{\left\{|t| > \frac{2\pi}{a}\right\}} 
  G(t)\,  e^{-i\left(t-\frac{\pi}{a}sgn(t)\right)a}\, dt$, hence 
$$J_{2,2}(a) = \frac 1 2 \left\{\int_{\left\{|t| > \frac{2\pi}{a}\right\}} G(t)\, e^{-ita}\, dt -
     \int_{\left\{|t| > \frac{\pi}{a}\right\}} 
      G\left(t+\frac \pi {a}sgn(t)\right)\, 
   e^{-ita}\, dt\right\},$$
from which we deduce:   
\begin{equation}
\label{J22-one} 
\big|J_{2,2}(a)\big| 
  \leq \frac 1 2 \int_{\left\{|t| > \frac{\pi}{a}\right\}} 
   \left|G(t) - G\left(t+\frac \pi {a}sgn(t)\right)\right|\, dt + 
   \int_{\left\{\frac{\pi}{a} < |t| < \frac{2\pi}{a}\right\}} |G(t)|  \, dt.  
\end{equation}
The last integral in (\ref{J22-one}) is $o(a^{\frac 1\delta-1})$ (use the second inequality in (\ref{int1})). Next, by using Proposition~\ref{series-2}, one can easily see that there exists $\vartheta_1 : {\mathbb R}\setminus\{0\} \rightarrow {\mathbb C}$ such that 
$$G\, '(u) = |u|^{-1-\frac 1\delta}\, \vartheta_1(u) \quad \text{ with }\quad  \lim_{u\rightarrow 0}\vartheta_1(u)=0.$$
Let $\varepsilon>0$, and let $\alpha=\alpha(\varepsilon)>0$ be such that $\sup_{|s|<\alpha}|\vartheta_1(s)|\le 
\frac{\varepsilon}{2\delta}$.  Note that  
$$\left[a>\frac {2\pi} \alpha \text{ and } |t| < \frac{\alpha}{2}\right]\ 
   \Rightarrow\ |t| \leq \left|t+\frac \pi {a}sgn(t)\right| < \alpha.$$
Then, by applying Taylor's inequality to $G$, we obtain for all $a$ such that $a>\frac {2\pi} \alpha$
\begin{equation}
\label{J22-two}
\int_{\left\{\frac{\pi}{a}<|t| < \frac{\alpha}{2}\right\}} 
   \left| G(t) - G\left(t+\frac \pi {a}sgn(t)\right)\right|\, dt 
  \leq  \frac{\varepsilon}{\delta}\, \frac{\pi}{a} \int_{\frac\pi {a}}^{+\infty}
     t^{-1-\frac 1 \delta}\, dt 
  \leq  \varepsilon\left(\frac\pi {a}\right)^{1-\frac 1\delta}. 
\end{equation}
Moreover, since $\Phi$ and $\Phi\, '$ are bounded on ${\mathbb R}\setminus[-\frac{\alpha}{2};\frac{\alpha}{2}]$ (by   Proposition~\ref{series-1-trans}), and from $\chi\in{\mathcal S}({\mathbb R})$, there exists a positive constant $D_\alpha$ such that: 
$$\forall x\in{\mathbb R}\setminus\left[-\frac{\alpha}{2};\frac{\alpha}{2}\right],\quad 
  \big|G\, '(x)\big|
   \leq \frac{D_\alpha}{x^{2}}.$$
Thus, if $a$ is large enough, namely if $a$ is such that $\frac{4D_\alpha}{\alpha}\, (\frac{\pi}{a})^{\frac 1\delta} \leq \varepsilon$, then we have 
\begin{equation}
\label{J22-three}
\int_{\left\{|t| \geq \frac{\alpha}{2}\right\}} \left| G(t) - G\left(t+\frac \pi {a}sgn(t)\right)\right|\, dt 
  \, \leq\,   2 D_\alpha\, \frac{\pi}{a} \int_{\frac{\alpha}{2}}^{+\infty}
     t^{-2}\, dt  
\, \leq\,  \varepsilon\, \left(\frac\pi {a}\right)^{1-\frac 1\delta}
\end{equation}
From (\ref{J22-one}) (\ref{J22-two}) (\ref{J22-three}), it follows that we have when $a$ is sufficiently large: 
$J_{2,2}(a) \leq \varepsilon\, \left(\frac\pi {a}\right)^{1-\frac 1\delta}$. From this fact and from (\ref{j2a-sum}) (\ref{int1}), we have: 
$$J_2(a) = o(a^{1/\delta-1})\ \mbox{ as }a\mbox{ goes to }
\infty.$$
The desired property for $I_2(a)$ is then established. This completes the proof of Theorem~~\ref{threc0}. 
\fdem
\noindent{\bf Remark:}
The generalization of our proof to the more general context when the distribution
of $\xi_0$ belongs to the normal domain of attraction of a stable distribution
of index $\beta$ is not as simple as in the recurrent case.
Indeed we used precise estimation of the derivative of $\psi$ that should require
the existence of the derivative of $\varphi_\xi$ outside $0$, which does not appear
as a natural hypothesis when $\beta<1$ since $\xi_0$ is not integrable.\\*
\\*
\noindent
{\bf Acknowledgments:}\\*
The authors are deeply grateful to Lo\"{\i}c Herv\'{e} for helpful and stimulating discussions.

\end{document}